\newcommand{\bq}{\begin{equation}}
\newcommand{\eq}{\end{equation}}
\newcommand{\bc}{\begin{center}}
\newcommand{\ec}{\end{center}}
\newcommand{\bit}{\begin{itemize}}
\newcommand{\eit}{\end{itemize}}
\newcommand{\ben}{\begin{enumerate}}
\newcommand{\een}{\end{enumerate}}
\theoremstyle{plain}
\newtheorem{theorem}{Theorem}[section]
\newtheorem*{theorem*}{Theorem}
\newtheorem{proposition}[theorem]{Proposition}
\newtheorem{corollary}[theorem]{Corollary}
\newtheorem{definition}[theorem]{Definition}
\newtheorem{conjecture}[theorem]{Conjecture}
\begin{document}

\journal{(internal report CC23-3)}

\begin{frontmatter}

\title{Asymptotic results for the Poisson distribution of order $k$}

\author[cc]{S.~R.~Mane}
\ead{srmane001@gmail.com}
\address[cc]{Convergent Computing Inc., P.~O.~Box 561, Shoreham, NY 11786, USA}

\begin{abstract}
The Poisson distribution of order $k$ is a special case of a compound Poisson distribution.
Its mean and variance are known, but results for its median and mode are difficult to obtain,
although a few cases have been solved and upper/lower bounds for the mode have been established.
This note points out that {\em asymptotic results}, for both the median and mode, exhibit simple patterns.
The calculations are numerical, hence the results are presented as conjectures.
The purpose of this note is to discern patterns for the median and mode,
including expressions for exact limits and rates of convergence and (possibly sharp) upper/lower bounds, in a sense to be made precise in the text.
The derivation of proofs of the results is left for future work.
\end{abstract}

\vskip 0.25in

\begin{keyword}
Poisson distribution of order $k$
\sep asymptotic formulas
\sep recurrence relations
\sep Compound Poisson distribution  
\sep discrete distribution 

\MSC[2020]{
60E05  
\sep 39B05 
\sep 11B37  
\sep 05-08  
}


\end{keyword}

\end{frontmatter}

\newpage
\setcounter{equation}{0}
\section{\label{sec:intro} Introduction}
This note presents numerical solutions to offer asymptotic results for the Poisson distribution of order $k$,
which is a statistical distribution introduced in \cite{PhilippouGeorghiouPhilippou}.
It is a variant (or extension) of the well-known Poisson distribution.
We begin with its formal definition. 
\begin{definition}
  \label{def:pmf_Poisson_order_k}
  The Poisson distribution of order $k$ (where $k\ge1$ is an integer) and parameter $\lambda > 0$
  is an integer-valued statistical distribution with the probability mass function (pmf)
\bq
\label{eq:pmf_Poisson_order_k}
f_k(n;\lambda) = e^{-k\lambda}\sum_{n_1+2n_2+\dots+kn_k=n} \frac{\lambda^{n_1+\dots+n_k}}{n_1!\dots n_k!} \,, \qquad n=0,1,2\dots
\eq
\end{definition}
\noindent
For $k=1$ it is the standard Poisson distribution.
The notation in \cite{PhilippouMeanVar,PhilippouFibQ,GeorghiouPhilippouSaghafi,KwonPhilippou}
employs `$x$' and states that the sum in eq.~\eqref{eq:pmf_Poisson_order_k} is taken over all tuples $(x_1,\dots,x_k)$ such that $x_1 + 2x_2 + \dots + kx_k = x$.
However it is more usual for $x$ to denote a real variable and $n$ to denote an integer,
hence we adopt the symbol $n$ for better contact with the literature on statistical distributions.
As a matter of terminology, we write ``$Y_{k,\lambda} \sim \textrm{Poisson}(k,\lambda)$''
to denote a random variable $Y_{k,\lambda}$ which is Poisson distributed with order $k$ and parameter $\lambda$.

The Poisson distribution of order $k$ is a special case of the compound Poisson distribution introduced by Adelson \cite{Adelson1966},
as will be explained in Sec.~\ref{sec:distrib}.
Although exact expressions for the mean and variance of the Poisson distribution of order $k$ are known \cite{PhilippouMeanVar},
exact results for its median and mode are difficult to obtain.
In fact, even for the standard Poisson distribution, the value of the median is only known approximately.
See, e.g.~\cite{Choi,AdellJodraPoisson2005,Alm2003,ChenRubin1986} and references therein.

With reference to the Poisson distribution of order $k$,
some exact results and also upper/lower bounds for the mode have been published in \cite{PhilippouFibQ,GeorghiouPhilippouSaghafi,KwonPhilippou}.
However, this note points out that {\em asymptotic results} (for both the median and mode) exhibit simple patterns.
The relevant recurrence relations are solved to present asymptotic results for both the median and mode.
The calculations are numerical, hence the results are presented as conjectures.
The purpose of this note is to discern patterns, including expressions for exact limits and rates of convergence and (possibly sharp) upper/lower bounds,
in a sense to be made precise below.
The derivation of proofs of the observed results is left for future work.  

The structure of this paper is as follows.
Sec.~\ref{sec:distrib} presents the basic definitions and notation employed in this note.
Secs.~\ref{sec:median} and \ref{sec:mode} present asymptotic results for the median and mode, respectively.
Sec.~\ref{sec:order} analyses the relative ordering of the mean, median and mode, using the results in Secs.~\ref{sec:median} and \ref{sec:mode}.
Sec.~\ref{sec:conc} concludes.

\newpage
\setcounter{equation}{0}
\section{\label{sec:distrib}Distributions and recurrences}
\subsection{\label{sec:notation}Basic notation and definitions}
First we establish notation for the mean, variance, median and mode.
It is conventional to denote the mean by $\mu$ and the variance by $\sigma^2$.
Many authors denote the median by $\nu$, and we shall do so below.
There is no widely accepted notation for the mode.
The mode is denoted by $m$ in \cite{PhilippouFibQ,GeorghiouPhilippouSaghafi,KwonPhilippou}
and we shall do so below.

The mean and variance of the Poisson distribution of order $k$ were derived by Philippou \cite{PhilippouMeanVar}.
We denote the mean and variance by $\mu_k(\lambda)$ and $\sigma^2_k(\lambda)$, respectively.
Their values are as follows
\begin{subequations}
\begin{align}
\label{eq:mean_Poisson_orderk}
\mu_k(\lambda) &= (1+\dots+k)\lambda = \frac12\,k(k+1)\lambda \,.
\\
\label{eq:variance_Poisson_orderk}
\sigma_k^2(\lambda) &= (1^2+\dots+k^2)\lambda = \frac16\,k(k+1)(2k+1)\lambda \,.
\end{align}
\end{subequations}
The expression for the variance corrects a misprint in \cite{PhilippouMeanVar}, where the fraction was given as $\frac12$ not $\frac16$.

We define the median following the exposition in \cite{AdellJodraPoisson2005}.
It has the merit that it yields a unique value for the median.
Let $X$ be a random variable with distribution function $F(x) := P(X \le x)$, $x\in\mathbb{R}$.
Then the median is defined via
$\nu := \inf\bigl\{ z\in\mathbb{R}:\; F(z) \ge 1/2 \bigr\}$
(see eq.~(1) in \cite{AdellJodraPoisson2005})
As stated in \cite{AdellJodraPoisson2005}, this definition implies that the median of an integer valued random variable $X$ is also an integer.
Such is the case for the Poisson distribution of order $k$.

The mode is defined as the location(s) of the {\em global maximum} of the probability density (resp.~mass) function for a continuous (resp.~discrete) random variable.
The mode may not be unique.
For the Poisson distribution with parameter $\lambda$, the mode equals $\lfloor\lambda\rfloor$ if $\lambda\not\in\mathbb{N}$,
but both $\lambda-1$ and $\lambda$ are modes if $\lambda\in\mathbb{N}$.
Kwon and Philippou \cite{KwonPhilippou} published a table of values of $\lambda$ where the Poisson distribution of order $k$ has a double mode,
for $2 \le k \le 4$ and $0 < \lambda \le 2$.
Numerical studies by the author have found examples of double modes for all examined values of $k$, but to date no examples of three or more joint modes have been found.
It remains an open question if there exist values of $\lambda$ where the Poisson distribution of order $k>1$ has three or more joint modes.

\subsection{\label{sec:CompoundPoissonProof}Compound Poisson distribution}
The Poisson distribution of order $k$ is an example of a compound Poisson distribution.
Adelson \cite{Adelson1966} defined the probability generating function (pgf) of the compound Poisson distribution as follows (eq.~(1) in \cite{Adelson1966}):
\bq
\label{eq:Adelson_compoundPoisson_pgf}
f(x) = \exp\Bigl(-\sum a_i\Bigr) \exp\Bigl(\sum a_ix^i\Bigr) \,.
\eq
Adelson permitted an unlimited number of coefficients $a_i$ in eq.~\eqref{eq:Adelson_compoundPoisson_pgf}, but we limit them to $k$ terms.
Next set $a_1=\dots=a_k=\lambda$ and collect terms in powers of $x$.
The bookkeeping yields 
\bq
\begin{split}
\label{eq:Adelson_fx_series}
f(x) &= e^{-k\lambda} \sum_{n=0}^\infty \frac{\lambda^n}{n!} (x+x^2+\dots+x^k)^n
\\
&= e^{-k\lambda}\sum_{n=0}^\infty \biggl(\sum_{n_1+2n_2+\dots+kn_k=n} \frac{\lambda^{n_1+\dots+n_k}}{n_1!\dots n_k!}\biggr) \,x^n \,.
\end{split}
\eq
The coefficient of $x^n$ in eq.~\eqref{eq:Adelson_fx_series}
is the pmf of the Poisson distribution of order $k$ in eq.~\eqref{eq:pmf_Poisson_order_k}.

Let us tabulate the bookkeeping of the tuples of indices $(n_1,\dots,n_k)$ in $f_k(n;\lambda)$, for $n=0,1,2,\dots$.
For simplicity, we do so for $k=2$ only.
We obtain the array displayed in Table \ref{tb:tablehk}.
Observe that tuples with the same power of $\lambda$ appear in the same {\em column}, not the same row.
The Poisson distribution of order $k$ arranges tuples associated with a given power of $\lambda$ into columns instead of rows.
Hence for $k>1$ we cannot associate a value of $n$ with a unique power of $\lambda$.

\subsection{\label{sec:recurrences}Recurrences}
Adelson \cite{Adelson1966} denoted the coefficient of $x^j$ in eq.~\eqref{eq:Adelson_fx_series} by $R_j$ and derived a recurrence for $R_{j+1}$.
To avoid confusion of notation and to make contact with other authors, we write $n$ in place of $j+1$, and as noted above we set $a_1=\dots=a_k=\lambda$.
Then (our version of) Adelson's recurrence is (see eq.~(6) in \cite{Adelson1966})
\bq
\label{eq:recurrence_Adelson_k_terms}
\begin{split}
R_n &= \frac{\lambda}{n}\,\bigl(R_{n-1} + 2R_{n-2} + \dots + kR_{n-k} \bigr) 
\\
&= \frac{\lambda}{n}\,\sum_{j=1}^k jR_{n-j} \,.
\end{split}
\eq
Note the following:
(i) We truncated the sum on the right in eq.~\eqref{eq:recurrence_Adelson_k_terms} to $k$ terms (Adelson did not).
(ii) From \cite{Adelson1966}, the initial value is $R_0 = \exp\bigl(-\sum a_i\bigr) = e^{-k\lambda}$.
(iii) We formally define $R_n=0$ for $n<0$, to handle negative indices in the sum in eq.~\eqref{eq:recurrence_Adelson_k_terms}.

The same recurrence was derived in \cite{GeorghiouPhilippouSaghafi},
where it was written as follows (eq.~(2.3) in \cite{GeorghiouPhilippouSaghafi}, and we have changed their symbol `$x$' to $n$)
\bq
\label{eq:recurrence_GPS}
nP_n = \sum_{j=1}^k j\lambda\,P_{n-j} \,.
\eq
From \cite{GeorghiouPhilippouSaghafi}, $P_n = f_k(n;\lambda)$ (see eq.~\eqref{eq:pmf_Poisson_order_k}).
The initial value is $P_0 = e^{-k\lambda}$.  
It is not stated explicitly in \cite{GeorghiouPhilippouSaghafi}, but $P_n = 0$ for $n<0$.
Kwon and Philippou \cite{KwonPhilippou} defined $h_k(n;\lambda) = e^{k\lambda}f_k(n;\lambda)$ and expressed the recurrence as follows,
with initial value $h_k(0;\lambda) = 1$ (eq.~(6) in \cite{KwonPhilippou}):
\bq  
\label{eq:rechk}
nh_k(n;\lambda) = \begin{cases} \displaystyle \sum_{j=1}^n j\lambda\,h_k(n-j;\lambda) & \qquad (1 \le n \le k) \,,
  \\ \\
  \displaystyle \sum_{j=1}^k j\lambda\,h_k(n-j;\lambda) & \qquad (n > k) \,.
  \end{cases}
\eq
All of eqs.~\eqref{eq:recurrence_Adelson_k_terms}, \eqref{eq:recurrence_GPS} and \eqref{eq:rechk} are equivalent
(but Adelson's recurrence in \cite{Adelson1966} is not limited to a fixed number of terms).
Observe also that $h_k(n;\lambda)$ is a polynomial in $\lambda$. Note the following:
\begin{enumerate}
\item
  First, $h_k(n;\lambda)$ is a polynomial of degree $n$ because the highest power of $\lambda$
  is given by the term $\lambda^n/n!$, obtained by setting $n_1=n$ and $n_2=\dots=n_k=0$.
  Such a term always exists.
\item
  Next, $h_k(n;\lambda)$ has no constant term if $n>0$, because at least one of the $n_i$ must be nonzero.
  The lowest power of $\lambda$ is not less than $\lfloor(n/k)\rfloor$, obtained by setting $n_k = \lfloor(n/k)\rfloor$,
  and using the other $n_i$ (if necessary) to bring the value of the sum $n_1+2n_2+\dots+kn_k$ up to $n$.
\end{enumerate}

\subsection{\label{sec:KMrec}Alternative recurrence}
Kostadinova and Minkova \cite{KostadinovaMinkova2013} derived a recurrence which requires a maximum of four terms, for any $k$ and $n$.
They wrote $p_i$ and omitted explicit mention of the dependence on $k$ and $\lambda$.
We write $p_n$ for consistency with the notation of this note.
Then their recurrence is (Proposition 1 in \cite{KostadinovaMinkova2013}) 
\bq
\label{eq:KMrec}
p_n = \Bigl(2 + \frac{\lambda-2}{n}\Bigr)p_{n-1}
-\Bigl(1 -\frac{2}{n}\Bigr)p_{n-2}
-\frac{k+1}{n}\,\lambda p_{n-k-1}
+\frac{k}{n}\,\lambda p_{n-k-2} \,.
\eq
The initial value is $p_0 = e^{-k\lambda}$ and we formally define $p_n = 0$ for $n<0$.

\subsection{\label{sec:kappa}Kappa}
Numerical investigations rapidly indicate that the value of $k(k+1)/2$ is a much more important parameter than $k$ itself.
So much so that we introduce the symbol $\kappa$ and define $\kappa = k(k+1)/2$.
Much better results are obtained if we work with $\kappa\lambda$ instead of $\lambda$ itself.
Note from eq.~\eqref{eq:mean_Poisson_orderk} that the mean is given by $\mu_k(\lambda) = \kappa\lambda$.
Hence the above statement is equivalent to saying it is better to work with the mean than with $\lambda$.
For the standard Poisson distribution (i.e.~$k=1$), then $\kappa=1$ and it is known that $\mu=\lambda$.
Many authors write ``Poisson$(\mu)$'' instead of ``Poisson$(\lambda)$'' and parameterize a Poisson distribution by its mean.
However, for $k>1$, the mean and $\lambda$ are not equal.
Indeed, it might be better to characterize the Poisson distribution of order $k$ by its mean, not by $\lambda$,
and this could be stated as a finding of this note.

\newpage
\setcounter{equation}{0}
\section{\label{sec:median}Median}
We denote the median of the Poisson distribution of order $k$ by $\nu_k(\lambda)$.
Since the value of the median is necessarily an integer, we adopt the following approach.
We fix a value for $k\ge1$ and also fix the value of the median, say $\nu_*$,
and find the set of values of $\lambda$ (or equivalently the mean $=\kappa\lambda$) for which the median equals $\nu_*$.
On searching the literature, the author was pleased to discover that the same point of view was employed in
\cite{AdellJodraPoisson2005} and \cite{Alm2003} (and implicitly in \cite{Choi} and \cite{ChenRubin1986}),
all of which published fruitful results for the median of the standard Poisson distribution.

\begin{conjecture}
\label{conj:Poisson_k_median_int_n}
Fix $k\ge1$ and let $n\in\mathbb{N}$ and set $\lambda = n/\kappa$.
Numerical studies reveal that for all tested values $k\ge1$ and $n\ge\kappa$ (i.e.~$\lambda\ge1$), the value of the median is
\bq
\label{eq:Poisson_k_median_int_n}
\nu_k(n/\kappa) = n - \biggl\lfloor\frac{k+4}{8}\biggr\rfloor \,.
\eq
\end{conjecture}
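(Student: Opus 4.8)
\medskip
\noindent\textbf{Proof strategy.}
The plan is to exploit the representation implicit in the factorisation of the pgf \eqref{eq:Adelson_compoundPoisson_pgf}, namely $Y_{k,\lambda} = \sum_{i=1}^{k} i\,N_i$ with $N_1,\dots,N_k$ independent and each Poisson$(\lambda)$: multiplying the factor pgfs, $\prod_{i=1}^{k}\exp(\lambda(x^i-1)) = e^{-k\lambda}\exp(\lambda(x+\cdots+x^k))$, recovers \eqref{eq:Adelson_fx_series}. Writing $S_r = \sum_{i=1}^{k} i^r$, the $r$-th cumulant of $Y_{k,\lambda}$ is then $S_r\lambda$; in particular the mean is $\mu = S_1\lambda = \kappa\lambda$, the variance is $\sigma^2 = S_2\lambda$, and the third cumulant is $S_3\lambda = \kappa^2\lambda$ (using $S_3 = \kappa^2$). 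Since $\gcd(1,2,\dots,k)=1$, $Y_{k,\lambda}$ is a lattice variable of span one. Taking $\lambda = n/\kappa$ makes the mean equal to the integer $n$, and by the definition of the median adopted in Sec.~\ref{sec:notation}, proving \eqref{eq:Poisson_k_median_int_n} amounts to establishing the two inequalities $P(Y_{k,\lambda}\le n-s-1) < \tfrac12 \le P(Y_{k,\lambda}\le n-s)$, where $s := \lfloor (k+4)/8\rfloor$.

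\smallskip
The engine I would use is an Edgeworth (Esseen) expansion of the lattice distribution function with a continuity correction. With $u_m := (m+\tfrac12-\mu)/\sigma$ and skewness $\gamma_1 = S_3\lambda/\sigma^3 = \kappa^2/(S_2^{3/2}\lambda^{1/2})$, one has $P(Y_{k,\lambda}\le m) = \Phi(u_m) - \tfrac{\gamma_1}{6}(u_m^2-1)\phi(u_m) + \mathcal R$, the continuity correction being precisely what annihilates the leading (sawtooth) lattice term so that $\mathcal R$ collects only fourth- and higher-cumulant contributions. Setting the smooth part equal to $\tfrac12$ and inverting near $u=0$ (Cornish--Fisher at $p=\tfrac12$) places the continuous median at $\mu - \tfrac{\gamma_1\sigma}{6} = n - \tfrac{\kappa^2}{6S_2}$, and the continuity correction converts this into the integer median $n - \lfloor \tfrac{\kappa^2}{6S_2}+\tfrac12\rfloor$.

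\smallskip
The decisive step is then a purely algebraic identity. With $\kappa=\tfrac12 k(k+1)$ and $S_2 = \tfrac16 k(k+1)(2k+1)$ one finds $\tfrac{\kappa^2}{6S_2} = \tfrac{k(k+1)}{4(2k+1)}$, hence
\[
\frac{\kappa^2}{6S_2} + \frac12 \;=\; \frac{k+4}{8} + \frac{k}{16k+8}.
\]
Because $0 < \tfrac{k}{16k+8} < \tfrac1{16} < \tfrac18$ for all $k\ge1$, this extra term never carries the value across an integer, so $\lfloor \tfrac{\kappa^2}{6S_2}+\tfrac12\rfloor = \lfloor (k+4)/8\rfloor = s$. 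Thus the leading skewness correction already reproduces the right-hand side of \eqref{eq:Poisson_k_median_int_n}, and it explains both the appearance of $\lfloor(k+4)/8\rfloor$ and the fact that the value jumps precisely at $k\equiv4\pmod8$.

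\smallskip
The hard part will be upgrading this heuristic to a statement uniform in $k$ and in $n\ge\kappa$. A Cornish--Fisher computation shows that the first correction to the median beyond $\tfrac{\kappa^2}{6S_2}$ is $O(1/\lambda)$ uniformly in $k$, because the order-$(k\lambda)^{-1}$ contributions (from the kurtosis and from $\gamma_1^2$) vanish at $p=\tfrac12$; hence for $\lambda$ large the leading term rigidly fixes the floor. The genuine obstacle is the regime of small $\lambda$ (that is, $n$ close to $\kappa$), where this $O(1/\lambda)$ remainder need not be small compared with the floor's safety margin. That margin is tightest exactly when $\{(k+4)/8\}$ equals $0$ or $\tfrac78$, i.e.\ $k\equiv4$ or $k\equiv3\pmod8$, where $\tfrac{\kappa^2}{6S_2}+\tfrac12$ lies only about $\tfrac1{16}$ from an integer. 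Closing the argument there requires an Esseen-type bound with explicit constants, uniform in $k$ and valid down to $\lambda=1$, together with control of the sign of the remainder so that $P(Y_{k,\lambda}\le n-s)$ stays on the correct side of $\tfrac12$; I expect the uniform control of the non-smooth lattice terms, and the doubly-small $(k,\lambda)$ cases no expansion can reach---presumably handled by a monotonicity argument in $\lambda$ supplemented by a finite computer-assisted check---to be where the real difficulty lies. The exact treatment available for $k=1$ (integer-mean Poisson, via the incomplete-gamma representation of the CDF) hints that a non-asymptotic identity may exist, but the compound structure blocks a direct generalisation.
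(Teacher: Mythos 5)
First, a point of context: the paper contains no proof of this statement --- it is presented purely as a conjecture supported by numerical evidence, with proofs explicitly deferred to future work. So your proposal cannot be compared against a paper proof; it must stand on its own. On its own terms, it is a valuable heuristic and, in one respect, goes beyond the paper. Your setup is correct: $Y_{k,\lambda}=\sum_{i=1}^k i\,N_i$ with $N_i$ i.i.d.\ Poisson$(\lambda)$ has $r$-th cumulant $S_r\lambda$, the span is one, $S_3=\kappa^2$, and the algebra checks out: $\kappa^2/(6S_2)=k(k+1)/(4(2k+1))$ and $\kappa^2/(6S_2)+\tfrac12=(k+4)/8+k/(8(2k+1))$, with margin $\mathrm{frac}\bigl((k+4)/8\bigr)+k/(8(2k+1))\le\tfrac78+\tfrac1{16}<1$ so the floor is unaffected. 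Strikingly, the constant your Cornish--Fisher inversion produces is exactly the quantity $\mathrm{frac}\bigl((k+4)/8\bigr)+k/(8(2k+1))$ that appears in the paper's conjectured asymptotics \eqref{eq:Poisson_k_asymp_alpha_kn2} and \eqref{eq:Poisson_k_limit_alpha_kn}, whose origin the paper explicitly states is unknown. Your skewness-correction mechanism thus supplies a structural explanation for the paper's empirically-fitted constants, including why the case distinction falls on residue classes of $k$ modulo $8$.

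That said, as a \emph{proof} the proposal has a genuine gap, which you partly acknowledge but understate. The conjecture claims validity for all $n\ge\kappa$, i.e.\ all $\lambda\ge1$, whereas an Edgeworth/Cornish--Fisher expansion is an asymptotic statement and by itself decides nothing at any fixed $\lambda$. To close the argument you would need a lattice Edgeworth expansion with an \emph{explicit, signed} remainder bound, uniform in $k$, valid down to $\lambda=1$; this is much stronger than a generic Berry--Esseen estimate, because in the tight residue classes $k\equiv3,4\pmod 8$ the floor is protected by a margin of only about $\tfrac1{16}$, so any bound with an unspecified (or merely moderate) constant divided by $\sigma$ is useless precisely where it matters. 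Moreover, your claim that the $O(1/\lambda)$ corrections ``vanish at $p=\tfrac12$'' concerns the formal expansion terms, not the actual remainder $\mathcal{R}$: for lattice variables the post-continuity-correction error still contains oscillatory contributions whose sign you do not control, and the sign is exactly what determines whether $P(Y_{k,\lambda}\le n-s)$ lands above or below $\tfrac12$ in the marginal cases. Finally, the small-$\lambda$ regime ($n$ near $\kappa$) that you propose to handle ``by monotonicity plus a finite computer-assisted check'' is not finite as stated, since it is a family of cases indexed by all $k$; making it finite requires the uniform-in-$k$ estimate first. So: correct target, correct algebra, illuminating mechanism --- but the analytic core is missing, and with it the statement remains, as in the paper, a conjecture.
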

\noindent
Note the following:
\begin{enumerate}
\item  
  For $k=1$, then $\kappa=1$ and eq.~\eqref{eq:Poisson_k_median_int_n} simplifies to the known value $\nu_1(n) = n$ (see Theorem 1 in \cite{AdellJodraPoisson2005}).  
\item
  For $k>1$, the value of $n$ and the median are not always equal, and eq.~\eqref{eq:Poisson_k_median_int_n} establishes the relation between them.
\item
  For smaller values $n<\kappa$, eq.~\eqref{eq:Poisson_k_median_int_n} is not always valid.
  In particular eq.~\eqref{eq:Poisson_k_median_int_n} cannot be correct for $n < \lfloor(k+4)/8\rfloor$.  
  It is an open question what is the smallest value of $n$ for which eq.~\eqref{eq:Poisson_k_median_int_n} is valid.
\item  
  Hence eq.~\eqref{eq:Poisson_k_median_int_n} is an asymptotic result for sufficiently large values of $n$,
  where in this context ``sufficiently large'' means $n \ge \kappa$, i.e.~$\lambda\ge1$.
\item
  Observe that eq.~\eqref{eq:Poisson_k_median_int_n} implies that for $n \ge \kappa$,
  a unit increase in the value of $n$ yields a unit increase in the value of the median.
  This is not always the case if $n<\kappa$.  
\end{enumerate}
Before proceeding with more numerical studies, we derive a simple result, viz.~the largest value of $\lambda$, say $\lambda_*$, such that the median equals zero.
\begin{theorem}
\label{thm:Poisson_k_median_lam_median_zero}
The Poisson distribution of order $k$ has a median value of zero if and only if $\lambda \le \lambda_*$, where $\lambda_* = (\ln 2)/k$.
\end{theorem}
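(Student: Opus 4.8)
The plan is to reduce the statement to a single elementary inequality, exploiting the median definition together with the fact that the support of the Poisson distribution of order $k$ is contained in $\{0,1,2,\dots\}$. First I would recall that, writing $F(z) = P(Y_{k,\lambda}\le z)$ for the distribution function, the adopted definition gives $\nu_k(\lambda) = \inf\{z\in\mathbb{R}:\,F(z)\ge 1/2\}$. Because $f_k(n;\lambda)=0$ for $n<0$, we have $F(z)=0$ for all $z<0$, whereas $F$ is non-decreasing and right-continuous with jumps at the nonnegative integers. Hence no $z<0$ can lie in the set $\{z:\,F(z)\ge 1/2\}$, so the infimum is automatically $\ge 0$, and $\nu_k(\lambda)=0$ if and only if $0$ itself is the smallest point at which $F$ reaches $1/2$, i.e.~if and only if $F(0)\ge 1/2$. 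No separate argument is needed to exclude a negative median.

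The second step is to evaluate $F(0)$ explicitly. Since $F(0)=P(Y_{k,\lambda}=0)=f_k(0;\lambda)$, I would apply the pmf formula in eq.~\eqref{eq:pmf_Poisson_order_k} with $n=0$. The defining constraint $n_1+2n_2+\dots+kn_k=0$ over nonnegative integers forces $n_1=\dots=n_k=0$, so the sum collapses to the single term $\lambda^0/(0!\cdots 0!)=1$, giving $f_k(0;\lambda)=e^{-k\lambda}$. Equivalently, this is just the stated initial value $P_0=e^{-k\lambda}$ of the recurrence eq.~\eqref{eq:recurrence_GPS}.

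Combining the two steps, $\nu_k(\lambda)=0$ holds if and only if $e^{-k\lambda}\ge 1/2$. Taking logarithms, this is equivalent to $-k\lambda\ge -\ln 2$, i.e.~$\lambda\le(\ln 2)/k=:\lambda_*$, which is the claimed threshold, and this chain of equivalences delivers both directions of the ``if and only if'' at once.

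The one place where a hurried argument could go wrong is the boundary case $\lambda=\lambda_*$, where $F(0)=1/2$ exactly. Here it is precisely the ``$\ge 1/2$'' convention in the adopted definition of the median (rather than a strict inequality) that keeps $\nu_k(\lambda_*)=0$, so the threshold is attained and the inequality in the statement is correctly non-strict. I would flag this explicitly so that the equivalence is airtight at the endpoint. Beyond this minor subtlety the result is essentially immediate; the main conceptual content is simply recognizing that the median is governed entirely by the single probability $f_k(0;\lambda)$.
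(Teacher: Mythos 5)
Your proposal is correct and follows the same route as the paper's own (much terser) proof: both reduce the claim to the observation that the median is zero precisely when $F(0)=f_k(0;\lambda)=e^{-k\lambda}\ge\frac12$, and solve $e^{-k\lambda_*}=\frac12$ for the threshold $\lambda_*=(\ln 2)/k$. Your write-up simply makes explicit the details the paper leaves as ``easily seen,'' including the endpoint case $F(0)=\frac12$, which is handled correctly by the $\ge\frac12$ convention in the median definition.
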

\begin{proof}
The equation to solve is simply $e^{-k\lambda_*}=\frac12$, hence $\lambda_* = (\ln 2)/k$.
It is easily seen that this is a necessary and sufficient condition.
\end{proof}
\begin{corollary}
\label{corr:Poisson_k_median_klam_median_zero}
The corresponding value of $\kappa\lambda_*$ is $\frac12(k+1)\ln2$.
Note that $\kappa\lambda_*>1$ for all $k>1$, e.g.~$\kappa\lambda_* = \frac32\ln2 \simeq 1.0397$ for $k=2$.
Hence the median of the Poisson distribution of order $k$ equals zero for $\kappa\lambda=1$ for all $k>1$,
and in fact $\nu_k(n/\kappa)=0$ for all $n \le n_*$ where $n_* = \bigl\lfloor \frac12(k+1)\ln2 \bigr\rfloor$ \,.
\end{corollary}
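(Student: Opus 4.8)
The plan is to obtain this corollary directly from Theorem~\ref{thm:Poisson_k_median_lam_median_zero} together with the definition $\kappa = k(k+1)/2$ introduced in Sec.~\ref{sec:kappa}; no fresh analysis of the pmf is needed, since every assertion is a rearrangement of the characterization ``median zero $\iff \lambda \le \lambda_*$'' already in hand. First I would substitute the value $\lambda_* = (\ln 2)/k$ supplied by the theorem into the product $\kappa\lambda_*$, which gives $\kappa\lambda_* = \tfrac{k(k+1)}{2}\cdot\tfrac{\ln 2}{k} = \tfrac12(k+1)\ln 2$. This is the first claim, and it is pure arithmetic.

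Next I would establish the strict inequality $\kappa\lambda_* > 1$ for every integer $k>1$. This reduces to $(k+1)\ln 2 > 2$, i.e.\ $k+1 > 2/\ln 2 \approx 2.885$, so the inequality holds exactly when $k > 2/\ln 2 - 1 \approx 1.885$; since $k$ is an integer, $k\ge 2$ suffices. The value $\tfrac32\ln 2 \approx 1.0397$ at $k=2$ confirms the borderline case quoted in the statement.

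The substantive step is the passage from the continuous parameter $\lambda$ to the integer index $n$. Setting $\lambda = n/\kappa$, Theorem~\ref{thm:Poisson_k_median_lam_median_zero} yields $\nu_k(n/\kappa) = 0$ if and only if $n/\kappa \le \lambda_*$, that is, $n \le \kappa\lambda_* = \tfrac12(k+1)\ln 2$. Because $n$ ranges over the positive integers, the standard fact that $n \le x \iff n \le \lfloor x\rfloor$ for integral $n$ lets me replace the right-hand bound by its floor, giving $\nu_k(n/\kappa)=0$ for all $n \le n_* = \lfloor \kappa\lambda_*\rfloor$, as claimed. The special assertion for $\kappa\lambda=1$ is then the instance $n=1$: since $\kappa\lambda_* > 1$ forces $n_* = \lfloor\kappa\lambda_*\rfloor \ge 1$, the index $n=1$ lies in the admissible range, so the median vanishes there.

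There is no genuine obstacle here; the only point demanding care is the inequality bookkeeping in the last step, where one must track the direction of the inequality through the substitution $\lambda \mapsto n/\kappa$ and justify the floor via the integrality of $n$. As a side remark, for $k>1$ the threshold $\kappa\lambda_* = \tfrac12(k+1)\ln2$ is irrational, so no lattice point $\lambda = n/\kappa$ ever coincides with the boundary $\lambda = \lambda_*$, and the passage to the floor involves no boundary ambiguity.
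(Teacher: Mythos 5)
Your proposal is correct and follows exactly the route the paper intends: the corollary is an immediate consequence of Theorem~\ref{thm:Poisson_k_median_lam_median_zero}, obtained by substituting $\lambda_*=(\ln 2)/k$ into $\kappa\lambda_*$, checking $(k+1)\ln 2>2$ for integer $k\ge 2$, and using integrality of $n$ to replace the bound $n\le\kappa\lambda_*$ by $n\le\lfloor\kappa\lambda_*\rfloor$. The paper gives no separate proof precisely because the argument is this direct, so your write-up (including the harmless extra remark on irrationality of the threshold) matches it in substance.
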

\noindent
As the value of $\lambda$ increases from zero, the value of the median increases in unit steps (starting from zero),
as is necessarily the case because for fixed $n$, the probability $P(Y_{k,\lambda} \le n)$ is a continuous function of $\lambda$
(where $Y_{k,\lambda} \sim \textrm{Poisson}(k,\lambda)$).
However, the locations of the steps (i.e.~the values of $\lambda$, or better $\kappa\lambda$, where the value of the median increases) are irregular,
and attain a simple pattern only for $n \ge \kappa$.
Fig.~\ref{fig:median_mode_k3} displays a plot of the median (solid line) as a function of $\kappa\lambda$ (which equals the mean)
for the Poisson distribution of order $3$ for $0 < \kappa\lambda \le 15$.
The mode is also displayed (dashed line) and will be discussed in Sec.~\ref{sec:mode}.
Then $\kappa=6$ for $k=3$.
Observe that the spacing of the steps in the median is irregular for $n<\kappa$, i.e.~$\lambda<1$,
but for $n\ge\kappa$, i.e. $n\ge6$, the median increases by one unit for each unit increase in $n$.
Note also that a step in the value of the median indicates a value of $\lambda$ such that $P(Y_{k,\lambda} \le \nu)=\frac12$ exactly.

In the rest of this section, we restrict attention to $n\ge\kappa$.
Then, for fixed $n\ge\kappa$, we determine the interval of values of $\lambda$ such that the median is given by $\nu_k(n/\kappa)$ in eq.~\eqref{eq:Poisson_k_median_int_n}.
To lay the groundwork for later developments, we begin with $k=1$ and define a set $\{\alpha_0,\alpha_1,\dots\}$ with the property that
if $\lambda\in(\ln2,\infty)$ and the value of the median is $\nu = n (\ge1)$, then the value of the mean $\mu$ lies in the interval
$\mu \in (\alpha_{n-1}, \alpha_n]$.
From \cite{Choi}, it is known that $\alpha_0=\ln2$ and that asymptotically 
\bq
\label{eq:Poisson_asymp_alpha_n}
\alpha_n = n + \frac23 + \frac{8}{405\, n} - \frac{64}{5103\, n^2} + \frac{2^7\cdot23}{3^9\cdot5^2\, n^3} + O\Bigl(\frac{1}{n^4}\Bigr) \,.
\eq
Numerical studies have verified eq.~\eqref{eq:Poisson_asymp_alpha_n} to high precision, which should be interpreted as a benchmark test of the numerical calculations.
We now study the behavior of the mean $\mu_k(\lambda)$ for $k>1$.
Analogous to the analysis for $k=1$, we define a set $\{\alpha_{k,n}|n\ge\kappa\}$ as follows:
for $n\ge\kappa+1$, the median $\nu_k(n/\kappa)$ is given by eq.~\eqref{eq:Poisson_k_median_int_n} and the value of the mean $\mu_k(\lambda)$ lies in the interval
$\mu_k(\lambda) \in (\alpha_{k,n-1}, \alpha_{k,n}]$.
By construction, if $\mu_k(\lambda) = \alpha_{k,n}$, then $P(Y_{k,\lambda} \le n)=\frac12$ exactly
(where $Y_{k,\lambda} \sim \textrm{Poisson}(k,\lambda)$).

Unlike the case $k=1$, where $\alpha_n$ is well-defined for all $n\ge0$, for $k>1$ we require $n\ge\kappa$ for $\alpha_{k,n}$ to be well-defined.
We saw in Corollary \ref{corr:Poisson_k_median_klam_median_zero} that the median is zero for all $0 \le n \le \bigl\lfloor \frac12(k+1)\ln2 \bigr\rfloor$.
Hence for $n < \kappa$ there may be several values of $n$ which share the same value of the median.
Numerical calculations indicate that the asymptotic value of $\alpha_{k,n}$ is
\bq
\label{eq:Poisson_k_asymp_alpha_kn2}
\alpha_{k,n} = n + \textrm{frac}\Bigl(\frac{k+4}{8}\Bigr) +\frac{k}{8(2k+1)} + A_{k,n} \,.
\eq
Here
\bq
\label{eq:Poisson_k_median_Akn}
A_{k,n} = \biggl(\frac{3\kappa}{349} + \frac{13}{1000}\biggr)\frac{1}{n} + \frac{13}{1500}\biggl(\biggl\lfloor\frac{k+4}{8}\biggr\rfloor -3\biggr)\,\frac{\kappa}{n^2} +\cdots
\eq
Note the following:
\begin{enumerate}
\item  
  The terms which depend only on $k$ seem to be exact.
  In particular, for $k=1$ eq.~\eqref{eq:Poisson_k_asymp_alpha_kn2} yields
  $\textrm{frac}((k+4)/8) + k/(8(2k+1)) = 5/8 + 1/24 = 2/3$,
  which is the correct value (see \cite{Choi} and eq.~\eqref{eq:Poisson_asymp_alpha_n}).
\item
  The dependence on $n$ in $A_{k,n}$ is approximate, despite its seeming detail.
  The leading term $3/349$ is $1/(116+\frac13)$, which was chosen to give the best fit to the numerical data.
  Similarly, the value of the term $13/(1000 n)$ is small for $n\ge\kappa$ and is approximate.
  For example, for $k=1$ it is known that the exact value of the coefficient of the $O(1/n)$ term is $8/405$ \cite{Choi}, but $(3/349)+(13/1000) \ne 8/405$.
  The residuals in the numerical data (the difference between the numerical results and the fit using eq.~\eqref{eq:Poisson_k_asymp_alpha_kn2})
  exhibit additional dependence of $O(1/n^2)$, but the residual terms are small and difficult to quantify.
\item
  For values (i) $k=10$ and $\lambda \in [100,1000]$ or (ii) $\lambda=100$ and $k \in [10,100]$,
  the difference between eq.~\eqref{eq:Poisson_k_asymp_alpha_kn2} and the numerical data is $O(10^{-8})$.
  Even for $k=2$ and $\lambda \in [50,1000]$ the difference is $O(10^{-7})$.
\item
  Notice that the expression ``$(k+4)/8$'' appears frequently in formulas pertaining to the median. The reason for this is not known.
\end{enumerate}
The asymptotic upper bound for the difference between the mean and the median is (again, only for $k>1$, and better for $k\ge10$)
\bq
\label{eq:Poisson_k_asymp_mean_minus_median}
\begin{split}
\alpha_{k,n} - \nu_k(n/\kappa) &= \frac{k+4}{8} +\frac{k}{8(2k+1)} + A_{k,n} 
\\
&= \frac{2k+9}{16} -\frac{1}{16(2k+1)} + A_{k,n} \,.
\end{split}
\eq
Hence we state the following conjectures.
\begin{conjecture}
\label{conj:Poisson_k_alpha_kn_terms_in_k_only}
The terms in eqs.~\eqref{eq:Poisson_k_asymp_alpha_kn2} and \eqref{eq:Poisson_k_asymp_mean_minus_median} which depend only on $k$ are exact.
\end{conjecture}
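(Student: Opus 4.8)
The plan is to exploit the compound-Poisson structure behind eq.~\eqref{eq:Adelson_compoundPoisson_pgf} and to reduce the conjecture to the leading term of an Edgeworth expansion for the lattice variable $Y_{k,\lambda}$. First I would note that with $a_1=\dots=a_k=\lambda$ the generating function in eq.~\eqref{eq:Adelson_compoundPoisson_pgf} becomes $\exp\!\bigl(k\lambda(g(x)-1)\bigr)$ with $g(x)=(x+\dots+x^k)/k$, so $Y_{k,\lambda}$ is a compound Poisson sum of $N\sim\mathrm{Poisson}(k\lambda)$ jumps drawn uniformly from $\{1,\dots,k\}$. Hence its $r$-th cumulant is $K_r=k\lambda\,E[J^r]$; this reproduces $K_1=\mu_k(\lambda)$ and $K_2=\sigma_k^2(\lambda)$ of eqs.~\eqref{eq:mean_Poisson_orderk}--\eqref{eq:variance_Poisson_orderk} and gives $K_3=\tfrac14 k^2(k+1)^2\lambda$. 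The decisive structural fact is that $K_3/K_2=3k(k+1)/[2(2k+1)]$ is \emph{independent of $\lambda$}: the skewness therefore injects a genuine $n$-independent constant into the median location, even though the standardized skewness $K_3/K_2^{3/2}=O(\lambda^{-1/2})=O(n^{-1/2})$ still tends to zero.

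Next I would apply the Edgeworth expansion with the half-integer continuity correction. Writing $\mu=\mu_k(\lambda)$, $\sigma=\sqrt{K_2}$ and $w=(m+\tfrac12-\mu)/\sigma$,
\[
P(Y_{k,\lambda}\le m)=\Phi(w)-\frac16\,\frac{K_3}{\sigma^3}\,(w^2-1)\,\phi(w)+O(n^{-1}),
\]
and I would impose $P(Y_{k,\lambda}\le m)=\tfrac12$ with $m=\nu_k(n/\kappa)=n-\lfloor(k+4)/8\rfloor$ from eq.~\eqref{eq:Poisson_k_median_int_n}; this is exactly the mean $\alpha_{k,n}$ at which the median jumps. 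Because $\mu-m=O(1)$ forces $w=O(n^{-1/2})$, the expansions $\Phi(w)=\tfrac12+w/\sqrt{2\pi}+O(n^{-3/2})$ and $\phi(w)=1/\sqrt{2\pi}+O(n^{-1})$ collapse the equation to the balance $w/\sqrt{2\pi}=-\tfrac16(K_3/\sigma^3)/\sqrt{2\pi}+(\text{higher})$, hence $w=-\tfrac16 K_3/\sigma^3+(\text{higher})$ and $\mu=m+\tfrac12+\tfrac16 K_3/\sigma^2+(\text{higher})$. Since $K_3/\sigma^2$ is $\lambda$-independent, the constant term is precisely $\tfrac12+\tfrac{k(k+1)}{4(2k+1)}$, which a one-line manipulation rewrites as $\tfrac{k+4}{8}+\tfrac{k}{8(2k+1)}=\tfrac{2k+9}{16}-\tfrac1{16(2k+1)}$. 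This is exactly the $k$-only part of eq.~\eqref{eq:Poisson_k_asymp_mean_minus_median}; subtracting the integer $\lfloor(k+4)/8\rfloor$ returns $\mathrm{frac}((k+4)/8)+\tfrac{k}{8(2k+1)}$, the $k$-only part of eq.~\eqref{eq:Poisson_k_asymp_alpha_kn2}. As a benchmark, $k=1$ gives $\tfrac12+\tfrac16=\tfrac23$, matching eq.~\eqref{eq:Poisson_asymp_alpha_n}.

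To establish the conjecture I must then show that \emph{no further $n$-independent constant} hides in the ``higher'' terms, so that they are all absorbed into $A_{k,n}=O(n^{-1})$ and, in particular, that there is no $O(n^{-1/2})$ term. The mechanism is that the next Edgeworth contributions carry the odd Hermite polynomials $\mathrm{He}_3(w)$ and $\mathrm{He}_5(w)$, which vanish at the centre $w=0$; combined with $K_4/\sigma^4=O(n^{-1})$ and $(K_3/\sigma^3)^2=O(n^{-1})$, their value near $w=0$ is $O(n^{-3/2})$, so they perturb $w$ only at $O(n^{-3/2})$ and hence perturb $\mu=m+\text{const}-\sigma w$ only at $O(\sigma n^{-3/2})=O(n^{-1})$. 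The accompanying lattice (Sheppard/Euler--Maclaurin) corrections, built from derivatives of the local density that scale like $\sigma^{-(r+1)}$, likewise first enter at $O(n^{-1})$ once the continuity correction at $m+\tfrac12$ removes the odd term.

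The main obstacle is making the expansion rigorous: one needs a lattice Edgeworth theorem for this compound-Poisson family with explicit, uniform error control as $n,\lambda\to\infty$, together with a careful treatment of the periodic lattice corrections (via Esseen's summation formula) certifying that they contribute only at $O(n^{-1})$ and never a constant. This is the delicate step, precisely because the median is read off at the centre of the distribution, where the cancellations among the odd-order terms are what protect the leading constant; controlling these same non-leading terms is also what would be required to pin down the true $A_{k,n}$, which eq.~\eqref{eq:Poisson_k_median_Akn} reports only approximately.
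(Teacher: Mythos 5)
The paper contains no proof of this statement: Conjecture \ref{conj:Poisson_k_alpha_kn_terms_in_k_only} is asserted purely on the strength of numerical fits, so there is no argument of the paper's to compare yours against, and your Edgeworth derivation is the only analytical route on the table. Its algebra is correct. With $Y_{k,\lambda}$ a compound Poisson sum of $N\sim\mathrm{Poisson}(k\lambda)$ jumps uniform on $\{1,\dots,k\}$, the cumulants are $K_r=k\lambda\,E[J^r]$, the ratio $K_3/K_2=3k(k+1)/(2(2k+1))$ is indeed $\lambda$-free, and your constant $\tfrac12+\tfrac{1}{6}K_3/K_2=\tfrac{k^2+5k+2}{4(2k+1)}$ coincides exactly with the paper's fitted $k$-only terms, since $\tfrac{k+4}{8}+\tfrac{k}{8(2k+1)}=\tfrac{2k+9}{16}-\tfrac{1}{16(2k+1)}=\tfrac{k^2+5k+2}{4(2k+1)}$, reducing to Choi's $2/3$ at $k=1$. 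This is genuinely valuable: it explains structurally where the recurring ``$(k+4)/8$'' comes from (a continuity correction $\tfrac12$ plus the skewness shift $K_3/(6\sigma^2)$), which the paper explicitly says it cannot explain. One point to state explicitly: you take Conjecture \ref{conj:Poisson_k_median_int_n} as input when you set $m=n-\lfloor(k+4)/8\rfloor$; this is not circular, because the same expansion yields that formula for $n$ large, but it must be proved in the same breath rather than assumed.

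The gap you flag is real --- as written this is a heuristic, not a proof --- but it is more tractable than you suggest, at least for this conjecture, which fixes $k>1$ and sends $n\to\infty$ (uniformity in $k$ is needed only for Conjecture \ref{conj:Poisson_k_limit_mean_minus_median_k_infty_n_infty}). By infinite divisibility, $Y_{k,n/\kappa}$ is a sum of $n$ i.i.d.\ copies of $Y_{k,1/\kappa}$, an integer-valued variable of maximal lattice span $1$ (its support contains $0$ and $1$) with moments of every order. The classical Esseen--Petrov one-term Edgeworth expansion for lattice sums, evaluated at the half-integer points $m+\tfrac12$, is precisely the expansion you wrote, with remainder $o(n^{-1/2})$ (indeed $O(1/n)$ under fourth moments) uniformly in the argument; the periodic (sawtooth) term is exactly cancelled by the continuity correction, so no extra constant or $O(n^{-1/2})$ term can appear --- this replaces your hand-waving about $\mathrm{He}_3$ and $\mathrm{He}_5$. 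The remaining step is the inversion: $P_\lambda(Y_{k,\lambda}\le m)$ is continuous and strictly decreasing in $\lambda$ (add an independent nonnegative compound Poisson increment), so $\alpha_{k,n}$ is well defined and an error $\epsilon_n$ in the CDF propagates to an error $O(\sigma\epsilon_n)=o(1)$ in the mean, giving $\alpha_{k,n}-n=\mathrm{frac}\bigl((k+4)/8\bigr)+k/(8(2k+1))+o(1)$. That proves exactness of the $k$-only terms for fixed $k$; note it does not address the monotonicity or sharpness claims of Conjectures \ref{conj:Poisson_k_alpha_kn_fix_k_dec_n}--\ref{conj:Poisson_k_limit_mean_minus_median}, nor the detailed form of $A_{k,n}$ in eq.~\eqref{eq:Poisson_k_median_Akn}, which would require higher-order terms. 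So: right approach, constants verified, and the missing step is classical rather than delicate.
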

\noindent
Next, for $k=1$, it is known that $\alpha_n-n \in(0,1)$ for all values $n\ge0$ (see \cite{AdellJodraPoisson2005}).
We state the following bound for $k>1$.
\begin{conjecture}
\label{conj:Poisson_k_bounds_alpha_kn}
For fixed $k>1$ and $n \ge \kappa$, we claim $\alpha_{k,n}-n \in(0,1)$.
This is not always true if $n<\kappa$.
\end{conjecture}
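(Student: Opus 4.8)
The plan is to reduce the two-sided bound to a pair of threshold inequalities for the cumulative distribution function and then feed in Conjecture~\ref{conj:Poisson_k_median_int_n}. Write $c:=\lfloor(k+4)/8\rfloor$, so that eq.~\eqref{eq:Poisson_k_median_int_n} reads $\nu_k(n/\kappa)=n-c$, and set $F_\lambda(m):=P(Y_{k,\lambda}\le m)$. By construction $\alpha_{k,n}$ is the right endpoint of the plateau of means on which the median equals $\nu_k(n/\kappa)=n-c$; equivalently it is the unique mean $\kappa\lambda$ at which $F_\lambda(n-c)=\tfrac12$, the value at which the median steps up from $n-c$ to $n-c+1$. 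Hence the claim $\alpha_{k,n}-n\in(0,1)$ is equivalent to the conjunction
\begin{equation}
F_\lambda(n-c)\big|_{\kappa\lambda=n}>\tfrac12 \qquad\text{and}\qquad F_\lambda(n-c)\big|_{\kappa\lambda=n+1}<\tfrac12 ,
\end{equation}
that is, to evaluating this single CDF at the two integer means $n$ and $n+1$.

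First I would justify well-definedness and this equivalence via monotonicity. Using the compound-Poisson representation of Sec.~\ref{sec:CompoundPoissonProof}, we may write $Y_{k,\lambda}=\sum_{i=1}^{N}X_i$ with $N\sim\mathrm{Poisson}(k\lambda)$ and $X_i$ i.i.d.\ uniform on $\{1,\dots,k\}$; as $\lambda$ increases the count $N$ is stochastically larger and the increments are positive, so $Y_{k,\lambda}$ is stochastically increasing in $\lambda$. Consequently $F_\lambda(m)$ is continuous and \emph{strictly} decreasing in $\lambda$ (strictly, since every nonnegative integer lies in the support), running from $1$ down to $0$. This makes $\alpha_{k,n}$ the unique root of $F_\lambda(n-c)=\tfrac12$ and legitimizes reading off the sign of $\alpha_{k,n}-n$ (resp.\ of $\alpha_{k,n}-(n+1)$) from the sign of $\tfrac12-F_\lambda(n-c)$ at mean $n$ (resp.\ at mean $n+1$).

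Then come the two bounds. The \emph{upper} bound is immediate from Conjecture~\ref{conj:Poisson_k_median_int_n} at index $n+1$ (legitimate since $n+1\ge\kappa+1$): there the median equals $(n+1)-c$, and by the definition $\nu=\inf\{z:F(z)\ge\tfrac12\}$ this forces $F_\lambda((n+1)-c-1)=F_\lambda(n-c)<\tfrac12$ at mean $n+1$, which is exactly the second inequality. The \emph{lower} bound uses Conjecture~\ref{conj:Poisson_k_median_int_n} at index $n$: the median equals $n-c$, so $F_\lambda(n-c)\ge\tfrac12$ at mean $n$. To upgrade $\ge$ to the strict $>$, and thereby exclude the degenerate case $\alpha_{k,n}=n$, I would invoke transcendence. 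Since $f_k(j;\lambda)=e^{-k\lambda}h_k(j;\lambda)$ with $h_k(j;\lambda)\in\mathbb{Q}[\lambda]$, we have $F_\lambda(n-c)=e^{-k\lambda}Q(\lambda)$, where $Q(\lambda)=\sum_{j=0}^{n-c}h_k(j;\lambda)\in\mathbb{Q}[\lambda]$ is positive for $\lambda>0$. At the rational point $\lambda=n/\kappa\in\mathbb{Q}_{>0}$, the equality $F_\lambda(n-c)=\tfrac12$ would give $e^{-kn/\kappa}=1/(2Q(n/\kappa))\in\mathbb{Q}$, contradicting the Hermite--Lindemann theorem that $e^\alpha$ is transcendental for every nonzero algebraic $\alpha$. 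Hence $F_\lambda(n-c)>\tfrac12$ at mean $n$, giving $\alpha_{k,n}>n$.

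I expect the genuine obstacle to be that this argument is \emph{conditional} on Conjecture~\ref{conj:Poisson_k_median_int_n}: removing that dependence amounts to proving the two CDF threshold inequalities directly, which requires sharp two-sided tail estimates for the compound-Poisson sum near its median and is the real crux. The transcendence step, by contrast, is short but essential, being the only place that secures the \emph{strictness} $\alpha_{k,n}>n$ rather than merely $\alpha_{k,n}\ge n$. Finally, the hypothesis $n\ge\kappa$ enters precisely through the two invocations of Conjecture~\ref{conj:Poisson_k_median_int_n}; for $n<\kappa$ both the median formula~\eqref{eq:Poisson_k_median_int_n} and the unit-step property fail, so the reduction collapses, consistent with the stated caveat that $\alpha_{k,n}-n\in(0,1)$ need not hold there.
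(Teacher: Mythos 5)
You should be clear at the outset that the paper contains \emph{no proof} of this statement: it is one of several conjectures supported purely by numerical computation, with all proofs explicitly deferred to future work, so there is no argument in the paper to compare yours against. Judged on its own terms, your reasoning is internally sound as a \emph{conditional} argument. The compound-Poisson representation (count Poisson$(k\lambda)$, increments uniform on $\{1,\dots,k\}$) does give that $F_\lambda(m)$ is continuous and strictly decreasing in $\lambda$, which identifies $\alpha_{k,n}$ as the unique root of $F_\lambda(n-c)=\tfrac12$; your reading of the paper's definition (the CDF evaluated at the median $n-c$, not at $n$) is the one consistent with the plateau structure and with the paper's earlier remark that a median step occurs where $P(Y_{k,\lambda}\le\nu)=\tfrac12$. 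Invoking Conjecture~\ref{conj:Poisson_k_median_int_n} at means $n$ and $n+1$ then correctly yields $n\le\alpha_{k,n}<n+1$, and the Hermite--Lindemann step correctly excludes $\alpha_{k,n}=n$, since $F_{n/\kappa}(n-c)=e^{-kn/\kappa}Q(n/\kappa)$ with $Q(n/\kappa)\in\mathbb{Q}_{>0}$ can never equal $\tfrac12$. That transcendence observation is the most valuable piece of the proposal and appears nowhere in the paper.

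The genuine gap is the one you name but underweight: Conjecture~\ref{conj:Poisson_k_median_int_n} is not a theorem, it is another of the same paper's numerically supported conjectures. What you have established is the implication ``Conjecture~\ref{conj:Poisson_k_median_int_n} at indices $n$ and $n+1$ implies Conjecture~\ref{conj:Poisson_k_bounds_alpha_kn} at index $n$,'' not the statement itself. Worse, the content of Conjecture~\ref{conj:Poisson_k_median_int_n} at mean $n$ \emph{is} the pair of threshold inequalities $F_{n/\kappa}(n-c)\ge\tfrac12>F_{n/\kappa}(n-c-1)$, so your reduction exchanges the target bound for untreated inequalities of exactly the same analytic type; no unconditional progress is made on the real crux, which is a two-sided estimate of the compound-Poisson CDF near its median. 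The proposal is still useful content --- it records a logical dependency among the paper's conjectures that the paper itself does not note, and it shows that strictness ($\alpha_{k,n}>n$ rather than $\ge n$) can be settled by transcendence rather than by estimates --- but as a proof of the statement it is incomplete, and necessarily so until the median formula \eqref{eq:Poisson_k_median_int_n} is proved for $n\ge\kappa$.
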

\noindent
Next, for $k=1$, it is known that the value of $\alpha_n -n$ is a decreasing function of $n$ for all $n\ge0$.
This was conjectured by Chen and Rubin \cite{ChenRubin1986} and proved by Alm \cite{Alm2003}.
It is not so clear-cut for $k>1$.
For $k=2$ and $\kappa \le n < 3\kappa$, i.e.~$3 \le n < 9$, it is {\em false} that the value of $\alpha_{k,n} -n$ is a decreasing function of $n$.
For $k\in[3,6]$, there are also problems of non-monotonicity if $n < 2\kappa$.
We make the following conjecture for $k>1$.
\begin{conjecture}
\label{conj:Poisson_k_alpha_kn_fix_k_dec_n}
For (i) $k=2$ and $n\ge3\kappa$ (i.e.~$n\ge9$)
or (ii) $k\in[3,6]$ and $n\ge2\kappa$ 
or (iii) $k\ge7$ and $n\ge\kappa$,
the value of $\alpha_{k,n} -n$ is a decreasing function of $n$.
\end{conjecture}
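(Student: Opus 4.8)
The plan is to prove the equivalent statement that the consecutive gaps satisfy $\alpha_{k,n+1}-\alpha_{k,n}<1$ in each of the three regimes, since $\alpha_{k,n}-n$ is decreasing in $n$ precisely when $\alpha_{k,n+1}-(n+1)<\alpha_{k,n}-n$. Write $\lambda_n$ for the unique parameter with $F_k(n;\lambda_n):=P(Y_{k,\lambda_n}\le n)=\tfrac12$, so that $\alpha_{k,n}=\kappa\lambda_n$; existence and uniqueness follow because $Y_{k,\lambda}$ stochastically increases in $\lambda$, so $\lambda\mapsto F_k(n;\lambda)$ is continuous and strictly decreasing. In these terms $\alpha_{k,n+1}-\alpha_{k,n}<1$ is the same as $\lambda_{n+1}<\lambda_n+1/\kappa$, and since $F_k(n+1;\cdot)$ is strictly decreasing with $F_k(n+1;\lambda_{n+1})=\tfrac12$, the whole claim reduces to the single inequality
\[
F_k\!\left(n+1;\ \lambda_n+\tfrac1\kappa\right) < \tfrac12 .
\]

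First I would exploit the structure behind the pgf in \eqref{eq:Adelson_fx_series}: with $a_1=\dots=a_k=\lambda$ the pgf factors as $\exp(\lambda\sum_{i=1}^k(x^i-1))$, so $Y_{k,\lambda}\stackrel{d}{=}\sum_{i=1}^k iN_i$ with $N_i$ i.i.d.\ $\mathrm{Poisson}(\lambda)$. Splitting each $N_i$ into independent $\mathrm{Poisson}(\lambda_n)$ and $\mathrm{Poisson}(1/\kappa)$ parts gives $Y_{k,\lambda_n+1/\kappa}\stackrel{d}{=}Y_{k,\lambda_n}+Z$ with $Z\sim\mathrm{Poisson}(k,1/\kappa)$ independent of $Y_{k,\lambda_n}$ and $E[Z]=\kappa\cdot(1/\kappa)=1$. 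Conditioning on $Z$ yields $F_k(n+1;\lambda_n+1/\kappa)=\sum_{z\ge0}p_z\,F_k(n+1-z;\lambda_n)$ with $p_z=P(Z=z)$. Subtracting $\tfrac12=F_k(n;\lambda_n)=\sum_z p_z F_k(n;\lambda_n)$, evaluating the telescoping differences $F_k(n+1-z;\lambda_n)-F_k(n;\lambda_n)$, and interchanging the order of summation reduces the displayed inequality to
\[
f_k(n+1;\lambda_n) \;<\; \frac1{p_0}\sum_{m\ge0}P(Z\ge m+2)\,f_k(n-m;\lambda_n),
\]
where I have used $\sum_{m\ge0}P(Z\ge m+2)=\sum_{z\ge2}(z-1)p_z=E[Z]-1+p_0=p_0$. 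Everything thus comes down to showing that $f_k(n+1;\lambda_n)$ lies \emph{below} the probability-weighted average of the values $f_k(n-m;\lambda_n)$, $m\ge0$.

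The next step is a shape analysis of the pmf $Q_j:=f_k(j;\lambda_n)$. At $\lambda=\lambda_n$ the median equals $n$, so $n$ sits just to the right of the peak, and I would first establish (weak) unimodality together with the right-flank decay $Q_{n+1}<Q_n<Q_{n-1}<\cdots$, controlling the ratios $Q_j/Q_{j-1}$ through the recurrence $jQ_j=\lambda\sum_{i=1}^k iQ_{j-i}$ of \eqref{eq:recurrence_GPS}. The weights $P(Z\ge m+2)$ are supported, to leading order, on $0\le m\lesssim k$, so the average samples $Q$ over a window of width $\sim k$ reaching back from $n$ toward and past the mode, whose distance below the median is only $O(k)$. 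If the mode lay a full $\sim k$ below $n$ the inequality would be immediate, since every sampled $Q_{n-m}$ would then exceed $Q_{n+1}$; the difficulty is that the window straddles the mode, so the comparison is a genuine near-cancellation.

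The \textbf{main obstacle} is exactly this cancellation. Since the standard deviation satisfies $\sigma_k(\lambda_n)\sim\sqrt{(2k+1)n/3}\gg k$ for $n\ge\kappa$, a local (Edgeworth/saddle-point) expansion of $Q_j$ about the peak makes its Gaussian part contribute equally to both sides, so the sign of $(\mathrm{average}-Q_{n+1})$ is fixed only by the first skewness correction, and the required inequality holds once that correction has settled to its asymptotic sign. Turning this into a theorem demands a \emph{uniform} expansion of $f_k(j;\lambda_n)$ across the width-$k$ window with error bounds sharp enough to pin the sign for every $n$ in the stated range. I expect the thresholds $n\ge3\kappa$ ($k=2$), $n\ge2\kappa$ ($3\le k\le6$), and $n\ge\kappa$ ($k\ge7$) to emerge precisely as the points where this subleading term first dominates the remainder, with the finitely many borderline small-$n$ cases dispatched by direct numerical verification.
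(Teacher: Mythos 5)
First, a point of calibration: the paper does not prove this statement at all --- it is a conjecture supported purely by numerical evidence, with proofs explicitly left for future work --- so your proposal is not competing against an existing argument; the only question is whether it closes the conjecture on its own, and it does not. What you do have is correct and worthwhile: the reduction of monotonicity of $\alpha_{k,n}-n$ to the single inequality $F_k\bigl(n+1;\lambda_n+\tfrac1\kappa\bigr)<\tfrac12$, the representation $Y_{k,\lambda_n+1/\kappa}\stackrel{d}{=}Y_{k,\lambda_n}+Z$ with $Z\sim\mathrm{Poisson}(k,1/\kappa)$ independent, and the identity $\sum_{m\ge0}P(Z\ge m+2)=p_0$ that turns the problem into comparing $f_k(n+1;\lambda_n)$ with a probability-weighted average of $f_k(n-m;\lambda_n)$; I checked this algebra and it is sound. (The ambiguity in the paper's definition of $\alpha_{k,n}$ --- the interval definition versus the parenthetical ``$P(Y_{k,\lambda}\le n)=\tfrac12$'' --- only relabels $n$ by the constant $\lfloor(k+4)/8\rfloor$, so your reading does not affect the monotonicity statement.)

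The genuine gap is that everything after this reduction is a program, not a proof. (a) The ``shape analysis'' you invoke --- weak unimodality and right-flank decay of $Q_j=f_k(j;\lambda_n)$ --- is itself unproven, and unimodality of the Poisson distribution of order $k$ is genuinely delicate: the paper exhibits, for $k=15$ and $\lambda\simeq0.25$, a pmf with local maxima at $0$, at $15$ and at $25$, so any unimodality lemma must carry a quantitative lower bound on $\lambda$ that you do not supply; moreover your averaging window straddles the mode, so monotone decay alone would not finish the argument in any case. (b) The decisive step --- showing the weighted average exceeds $f_k(n+1;\lambda_n)$ --- is, as you yourself concede, a near-cancellation whose sign is governed by skewness-order corrections in a uniform local expansion of the pmf; no such expansion with explicit, sign-pinning error bounds is even sketched, and this is exactly where the entire difficulty of the conjecture lives. (c) The hope that the thresholds $n\ge3\kappa$ ($k=2$), $n\ge2\kappa$ ($3\le k\le6$), $n\ge\kappa$ ($k\ge7$) will ``emerge'' from that expansion is speculation --- these thresholds are empirical in the paper --- and in regime (iii) the parameter $k\ge7$ is unbounded, so ``finitely many borderline cases dispatched by direct numerical verification'' is not available: you would need error bounds uniform in $k$ as well as in $n$. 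In short, you have produced a correct and potentially useful reduction together with an honest assessment of the obstacle, but the conjecture remains open after your argument.
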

\noindent
For $k=1$, sharp upper and lower bounds for the value of $\alpha_n-n$ were conjectured by Chen and Rubin \cite{ChenRubin1986} and proved by Choi \cite{Choi}.
We state the following limit and lower bound for $k>1$.
\begin{conjecture}
\label{conj:Poisson_k_limit_alpha_kn}
If the conditions in Conjecture \ref{conj:Poisson_k_alpha_kn_fix_k_dec_n} are satisfied, so that $\alpha_{k,n} - n$ is a decreasing function of $n$,
then for fixed $k>1$, the limit as $n\to\infty$ is as follows,
and moreover we conjecture it is a sharp lower bound for the value of $\alpha_{k,n} - n$.
\bq
\label{eq:Poisson_k_limit_alpha_kn}
\lim_{n\to\infty} (\alpha_{k,n} - n) = \textrm{\rm frac}\Bigl(\frac{k+4}{8}\Bigr) +\frac{k}{8(2k+1)} \,.
\eq
A conjecture for a sharp upper bound for the value of $\alpha_{k,n} - n$ is not known.
\end{conjecture}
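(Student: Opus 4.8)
The plan is to read off the limit from a one-term Edgeworth (local central limit) expansion of the order-$k$ Poisson law as $\lambda\to\infty$, and then translate the resulting mean-minus-median statement into one about $\alpha_{k,n}-n$ using the median formula. First I would record the cumulants. By eq.~\eqref{eq:Adelson_fx_series} the probability generating function is $G(x)=\exp\bigl(\lambda\sum_{i=1}^k(x^i-1)\bigr)$, so the cumulant generating function is $\lambda\sum_{i=1}^k(e^{it}-1)$ and the $r$th cumulant is $\kappa_r=\lambda\sum_{i=1}^k i^r$. In particular $\kappa_1=\mu=\kappa\lambda$ and $\kappa_2=\sigma^2=s_2\lambda$ with $s_2=\sum_{i=1}^k i^2=\tfrac16 k(k+1)(2k+1)$ [consistent with eqs.~\eqref{eq:mean_Poisson_orderk}--\eqref{eq:variance_Poisson_orderk}], while the identity $\sum_{i=1}^k i^3=\bigl(\sum_{i=1}^k i\bigr)^2$ gives $\kappa_3=\kappa^2\lambda$. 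The decisive structural fact is that $\kappa_3/\sigma^2=\kappa^2/s_2$ is \emph{independent of $\lambda$}, whereas the standardized skewness $\kappa_3/\sigma^3=O(\lambda^{-1/2})$ tends to $0$; thus the law is asymptotically normal and only the first skewness correction survives in the limit.

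Next I would invoke the Esseen lattice expansion. Since $f_k(0;\lambda),f_k(1;\lambda)>0$ the support generates the full integer lattice (span $1$), and writing $z_c=(m+\half-\mu)/\sigma$ for the continuity-corrected argument, the distribution function satisfies
\[
P(Y_{k,\lambda}\le m)=\Phi(z_c)-\phi(z_c)\,\frac{\kappa_3}{6\sigma^3}\,(z_c^2-1)+O(\sigma^{-2}),
\]
the half-integer correction having absorbed the leading lattice oscillation. Imposing $P(Y_{k,\lambda}\le m)=\half$ and expanding about $z_c=0$ (where $\Phi(0)=\half$ and $\phi(0)=(2\pi)^{-1/2}$) gives $z_c=-\kappa_3/(6\sigma^3)+o(\sigma^{-1})$, hence
\[
\mu=m+\half+\frac{\kappa_3}{6\sigma^2}+o(1)=m+\half+\frac{\kappa^2}{6s_2}+o(1).
\]
Evaluating the constant, $\kappa^2/(6s_2)=k(k+1)/\bigl(4(2k+1)\bigr)$; for $k=1$ this is $\tfrac16$, so $\mu-m\to\tfrac23$, recovering the classical benchmark behind eq.~\eqref{eq:Poisson_asymp_alpha_n}. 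More generally $\mu-m\to\half+k(k+1)/(4(2k+1))=(k+4)/8+k/\bigl(8(2k+1)\bigr)$, which is precisely the $\lambda$-independent limit asserted for the mean-minus-median difference in eq.~\eqref{eq:Poisson_k_asymp_mean_minus_median} (and matches Conjecture~\ref{conj:Poisson_k_alpha_kn_terms_in_k_only}).

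Finally I would pass from the median to $\alpha_{k,n}$. By the definition of $\alpha_{k,n}$, the value $\mu=\alpha_{k,n}$ is the mean at which $P(Y_{k,\lambda}\le m)=\half$ with $m$ the corresponding median; invoking the median formula $m=\nu_k(n/\kappa)=n-\lfloor(k+4)/8\rfloor$ of Conjecture~\ref{conj:Poisson_k_median_int_n} and substituting into the relation above,
\[
\alpha_{k,n}-n=\half+\frac{k(k+1)}{4(2k+1)}-\Bigl\lfloor\frac{k+4}{8}\Bigr\rfloor+o(1).
\]
The elementary identity $\half+k(k+1)/(4(2k+1))=(k+4)/8+k/(8(2k+1))$ (both sides equal $(k^2+5k+2)/(4(2k+1))$) then lets the integer part combine with $(k+4)/8$ to give $\mathrm{frac}((k+4)/8)$, so that the $o(1)$ term vanishes and
\[
\alpha_{k,n}-n\longrightarrow\mathrm{frac}\Bigl(\frac{k+4}{8}\Bigr)+\frac{k}{8(2k+1)},
\]
which is eq.~\eqref{eq:Poisson_k_limit_alpha_kn}. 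The sharp-lower-bound assertion is then immediate from Conjecture~\ref{conj:Poisson_k_alpha_kn_fix_k_dec_n}: a sequence decreasing to its limit $L$ stays strictly above $L$ at every finite $n$ while approaching it, so $L=\inf_n(\alpha_{k,n}-n)$ is the best possible lower bound.

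The main obstacle is the analytic step. One must establish the lattice Edgeworth expansion for the compound Poisson of order $k$ with an error term that is \emph{uniform} as $\lambda\to\infty$, so that the correction terms provably vanish in the limit rather than merely numerically. The cleanest route exploits infinite divisibility: for integer $\lambda=N$, $Y_{k,N}$ is a sum of $N$ i.i.d.\ copies of $Y_{k,1}$, to which the standard Petrov/Esseen expansions for sums of lattice variables apply directly, and the non-integer case follows by the same steepest-descent evaluation of the contour integral for the tail $\sum_{j>m}f_k(j;\lambda)$ (this is where Choi's incomplete-gamma analysis of the $k=1$ case must be generalized). A secondary dependence is that the conversion to $\alpha_{k,n}-n$ uses the still-unproven Conjecture~\ref{conj:Poisson_k_median_int_n}; a self-contained proof would need to derive $m=n-\lfloor(k+4)/8\rfloor$ in tandem, which by the same expansion reduces to locating the integer nearest to $\mu-\half-\kappa^2/(6s_2)$ uniformly across each median plateau.
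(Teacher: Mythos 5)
There is nothing to compare against on the paper's side: Conjecture~\ref{conj:Poisson_k_limit_alpha_kn} is not proved in the paper at all; the limit \eqref{eq:Poisson_k_limit_alpha_kn} is obtained there purely by fitting numerical data to \eqref{eq:Poisson_k_asymp_alpha_kn2}--\eqref{eq:Poisson_k_median_Akn} and discarding the fitted $O(1/n)$ part $A_{k,n}$. Your Edgeworth route is therefore a genuinely different---and more informative---approach, and your computations check out: the cumulants are $\kappa_r=\lambda\sum_{j=1}^k j^r$ (your cumulant generating function has a typo, it should read $\lambda\sum_{j=1}^k(e^{\mathrm{i}jt}-1)$, but you use the correct consequence); the identity $\sum_{j=1}^k j^3=\bigl(\sum_{j=1}^k j\bigr)^2$ makes $\kappa_3/\sigma^2=\kappa^2/s_2$ independent of $\lambda$, which is exactly why a finite, $k$-dependent limit exists at all; the half-integer continuity correction does annihilate the leading sawtooth term of the lattice Esseen expansion (the span is $1$ since $f_k(0;\lambda),f_k(1;\lambda)>0$); and the algebra $\tfrac12+\tfrac{k(k+1)}{4(2k+1)}=\tfrac{k+4}{8}+\tfrac{k}{8(2k+1)}=\tfrac{k^2+5k+2}{4(2k+1)}$ is correct, with the floor from Conjecture~\ref{conj:Poisson_k_median_int_n} turning $(k+4)/8$ into its fractional part. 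The $k=1$ value $2/3$ agrees with \eqref{eq:Poisson_asymp_alpha_n}, and as a bonus your derivation explains the recurring ``$(k+4)/8$'' that the paper explicitly flags as unexplained: it is simply the decomposition of $\tfrac12+\kappa_3/(6\sigma^2)$ forced by the median's integer shift.

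Two caveats keep this a derivation rather than a proof, as you yourself acknowledge. First, the one-term lattice Edgeworth expansion with an error uniform as $\lambda\to\infty$ (for real $\lambda$, not only integer $\lambda$ where i.i.d.\ decomposition and Petrov/Esseen apply directly) is asserted, not established; that is the entire analytic content of the problem and precisely what the paper defers to future work. Second, you invoke the unproven Conjecture~\ref{conj:Poisson_k_median_int_n} to identify the median plateau---legitimate here, since the very definition of $\alpha_{k,n}$ presupposes it, and the statement being addressed is in any case already conditional on Conjecture~\ref{conj:Poisson_k_alpha_kn_fix_k_dec_n}. Your sharpness argument is fine: for a strictly decreasing sequence the limit equals the infimum. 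Net assessment: correct target value, sound heuristic mechanism, and a methodology strictly stronger than the paper's curve fitting; what remains open is the rigorous uniform expansion.
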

\noindent
For $k=1$, sharp bounds on the difference between the mean and median are known \cite{Choi}.
We state the following for $k>1$.
\begin{conjecture}
\label{conj:Poisson_k_limit_mean_minus_median}
If the conditions in Conjecture \ref{conj:Poisson_k_alpha_kn_fix_k_dec_n} are satisfied, so that $\alpha_{k,n} - n$ is a decreasing function of $n$,
then for fixed $k>1$, the limit as $n\to\infty$ is as follows
and moreover we conjecture it is a sharp lower bound for the value of $\alpha_{k,n} - \nu_k(n/\kappa)$.
\bq
\label{eq:Poisson_k_limit_mean_minus_median}
\lim_{n\to\infty} (\alpha_{k,n} - \nu_k(n/\kappa)) = \frac{2k+9}{16} -\frac{1}{16(2k+1)} \,.
\eq
A conjecture for a sharp upper bound for the value of $\alpha_{k,n} - \nu_k(n/\kappa)$ is not known.
\end{conjecture}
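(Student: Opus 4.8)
The plan is to reduce the statement to Conjecture~\ref{conj:Poisson_k_limit_alpha_kn} and then to attack that limit directly. First I would note that, by the median formula~\eqref{eq:Poisson_k_median_int_n}, the quantity of interest differs from $\alpha_{k,n}-n$ by a term that is constant in $n$:
\[
\alpha_{k,n}-\nu_k(n/\kappa) = \bigl(\alpha_{k,n}-n\bigr) + \Bigl\lfloor \tfrac{k+4}{8} \Bigr\rfloor .
\]
Since $\lfloor(k+4)/8\rfloor$ is independent of $n$, the limit, the monotonicity (inherited from Conjecture~\ref{conj:Poisson_k_alpha_kn_fix_k_dec_n}), and the sharp-lower-bound property all transfer verbatim from Conjecture~\ref{conj:Poisson_k_limit_alpha_kn}. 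Adding this constant to the right-hand side of~\eqref{eq:Poisson_k_limit_alpha_kn} and using $\mathrm{frac}(x)+\lfloor x\rfloor = x$ yields $\tfrac{k+4}{8}+\tfrac{k}{8(2k+1)}$, which is algebraically identical to $\tfrac{2k+9}{16}-\tfrac{1}{16(2k+1)}$ (both equal $\tfrac{k^2+5k+2}{4(2k+1)}$). Thus, modulo the earlier conjectures, the statement is immediate, and the real content lies in proving the limit itself.

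To prove that limit directly I would use the representation $Y_{k,\lambda}=\sum_{i=1}^{k} i\,N_i$ with $N_i$ i.i.d.\ $\mathrm{Poisson}(\lambda)$, read off from the product form of the pgf behind~\eqref{eq:Adelson_fx_series}. The cumulants are then elementary: the variance is $\sigma^2=\tfrac{2k+1}{3}\kappa\lambda$ and the third cumulant is $\mu_3=\bigl(\sum_{i=1}^{k} i^3\bigr)\lambda=\kappa^2\lambda$, so the scale-invariant ratio $\mu_3/\sigma^2 = 3\kappa/(2k+1)$ is \emph{independent of $\lambda$} --- and this is exactly what forces a finite, $n$-independent limit. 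Writing $\alpha_{k,n}=n+c$ and inserting a continuity-corrected Edgeworth expansion for the span-$1$ lattice variable $Y_{k,\lambda}$ into the defining relation $P\bigl(Y_{k,\lambda}\le\nu_k(n/\kappa)\bigr)=\tfrac12$ at the point $\nu_k(n/\kappa)=n-\lfloor(k+4)/8\rfloor$, the standardized argument $z=(\nu_k+\tfrac12-\mu)/\sigma$ tends to $0$ as $\lambda\to\infty$. Solving, to leading order near $z=0$, $\tfrac12+\phi(0)\bigl(z+\tfrac{\mu_3}{6\sigma^3}\bigr)+o(1/\sigma)=\tfrac12$ gives $z=-\mu_3/(6\sigma^3)$, hence
\[
\lim_{n\to\infty}\bigl(\alpha_{k,n}-\nu_k(n/\kappa)\bigr) = \tfrac12+\frac{\mu_3}{6\sigma^2} = \tfrac12+\frac{\kappa}{2(2k+1)},
\]
the claimed value. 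Here $\tfrac12$ is the continuity correction and $\mu_3/(6\sigma^2)$ the skewness correction; it is reassuring that $\lfloor(k+4)/8\rfloor$ is exactly the floor of this limiting gap --- equivalently, the nearest integer to the skewness term $\mu_3/(6\sigma^2)$ --- which both explains the recurrent factor $(k+4)/8$ and, by comparing $P(Y_{k,\lambda}\le n-m)$ with $P(Y_{k,\lambda}\le n-m-1)$ at mean $n$, delivers Conjecture~\ref{conj:Poisson_k_median_int_n}.

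The main obstacle is twofold. First, one must make the Edgeworth expansion rigorous for this lattice family, uniformly in the relevant window of $\lambda$, and control the remainder sharply enough to conclude that its contribution to $c$ is $O(1/\sqrt{n})$ and therefore disappears in the limit; the continuity correction annihilates the leading periodic (sawtooth) lattice term because it is evaluated at a half-integer, but confirming that the subleading lattice terms and the higher Edgeworth coefficients do not perturb the constant requires care. Second, and harder, is the monotonicity and sharpness claimed here and in Conjecture~\ref{conj:Poisson_k_alpha_kn_fix_k_dec_n}: to show that $\alpha_{k,n}-\nu_k(n/\kappa)$ decreases to the limit \emph{from above} one needs the sign of the full $1/n$ expansion, not merely its leading coefficient, in the spirit of Alm's argument for $k=1$. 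I expect this sign control --- uniform in $k$ and valid down to the stated thresholds $n\ge\kappa$, $2\kappa$, or $3\kappa$ --- to be the crux, all the more so because the numerical fit in~\eqref{eq:Poisson_k_median_Akn} shows the $O(1/n)$ coefficient is itself only approximately determined.
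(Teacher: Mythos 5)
You should first be aware that the paper contains no proof of this statement: it is explicitly a conjecture extracted from numerical fits, with ``derivation of proofs \dots left for future work.'' The closest the paper comes to an argument is exactly your first paragraph: combine the median formula \eqref{eq:Poisson_k_median_int_n} with the asymptotic \eqref{eq:Poisson_k_asymp_alpha_kn2}, use $\mathrm{frac}(x)+\lfloor x\rfloor=x$, and let the $A_{k,n}$ terms vanish --- this is the paper's own chain leading to \eqref{eq:Poisson_k_asymp_mean_minus_median}. Your algebra there is correct (both forms of the constant equal $\tfrac{k^2+5k+2}{4(2k+1)}$), and you are right that the limit, monotonicity, and sharp-lower-bound properties transfer verbatim from Conjecture~\ref{conj:Poisson_k_limit_alpha_kn}, since the two quantities differ by the $n$-independent integer $\lfloor(k+4)/8\rfloor$.

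Your Edgeworth argument, however, is genuinely different from anything in the paper, and it is the more valuable part. I checked the ingredients: from $Y_{k,\lambda}=\sum_{i=1}^k iN_i$ one gets $\sigma^2=\tfrac{2k+1}{3}\kappa\lambda$ and $\mu_3=\bigl(\sum i^3\bigr)\lambda=\kappa^2\lambda$, so the skewness correction $\mu_3/(6\sigma^2)=\kappa/(2(2k+1))=\tfrac{k}{8}+\tfrac{k}{8(2k+1)}$ is indeed $\lambda$-independent, and adding the continuity correction $\tfrac12$ gives $\tfrac{k+4}{8}+\tfrac{k}{8(2k+1)}=\tfrac{2k+9}{16}-\tfrac{1}{16(2k+1)}$, precisely the conjectured limit. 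This does what the paper cannot: it derives from first principles the constant that the paper obtained only by curve-fitting, and it explains the recurring ``$(k+4)/8$'' whose origin the paper explicitly declares unknown (it is the nearest integer to the skewness term, equivalently the integer part of the limiting mean-minus-median gap). The same heuristic, applied to the comparison of consecutive cumulative probabilities, also rationalizes Conjecture~\ref{conj:Poisson_k_median_int_n} itself. That said, what you have is a heuristic, not a proof, and the obstacles you name are the genuine ones: a rigorous lattice Edgeworth expansion with uniform remainder control, and --- harder --- the sign control of the $O(1/n)$ correction needed for the monotone-decrease and sharp-lower-bound claims, which the paper's own fit \eqref{eq:Poisson_k_median_Akn} suggests is delicate since even its leading coefficient is only approximately determined. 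So your proposal does not close the conjecture, but it strictly exceeds the paper's supporting evidence in explanatory content, and the reduction step it rests on is sound.
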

\begin{conjecture}
\label{conj:Poisson_k_limit_mean_minus_median_k_infty_n_infty}
We can rewrite eq.~\eqref{eq:Poisson_k_limit_mean_minus_median}
and conjecture a further limit as both $k\to\infty$ and $n\to\infty$:
\bq
\label{eq:Poisson_k_limit_mean_minus_median_k_infty_n_infty}
\lim_{k\to\infty} \biggl\{\Bigl[\,\lim_{n\to\infty} (\alpha_{k,n} - \nu_k(n/\kappa))\Bigr] - \frac{2k+9}{16}\,\biggr\} = 0 \,.
\eq
\end{conjecture}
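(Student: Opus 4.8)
The plan is to derive this statement as an immediate consequence of Conjecture~\ref{conj:Poisson_k_limit_mean_minus_median}, which already furnishes a closed form for the inner limit; the outer limit then reduces to observing that a single rational correction term decays. No new asymptotic analysis of the distribution is required.

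First I would confirm that the inner limit $\lim_{n\to\infty}(\alpha_{k,n}-\nu_k(n/\kappa))$ is legitimately evaluated by eq.~\eqref{eq:Poisson_k_limit_mean_minus_median}. This requires the hypotheses of Conjecture~\ref{conj:Poisson_k_alpha_kn_fix_k_dec_n} to hold, but in the present setting we ultimately send $k\to\infty$, and for every $k\ge7$ the relevant condition is merely $n\ge\kappa$, which is satisfied once $n$ is large. Hence for all sufficiently large $k$ the inner limit exists and equals $\frac{2k+9}{16}-\frac{1}{16(2k+1)}$.

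Substituting this value into the bracketed quantity, the $\frac{2k+9}{16}$ terms cancel, leaving
\[
\Bigl[\lim_{n\to\infty}(\alpha_{k,n}-\nu_k(n/\kappa))\Bigr]-\frac{2k+9}{16} = -\frac{1}{16(2k+1)}.
\]
Since $-\frac{1}{16(2k+1)}$ is $O(1/k)$, it tends to $0$ as $k\to\infty$, which establishes eq.~\eqref{eq:Poisson_k_limit_mean_minus_median_k_infty_n_infty}.

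The main obstacle is entirely inherited: all the analytic difficulty lies in proving Conjecture~\ref{conj:Poisson_k_limit_mean_minus_median} (equivalently, in pinning down the $n\to\infty$ behaviour of the exact-median threshold $\alpha_{k,n}$ defined by $P(Y_{k,\lambda}\le n)=\tfrac12$). Once that inner limit is rigorously in hand, the present statement carries no further content; it is simply the remark that the $k$-dependent correction $-\frac{1}{16(2k+1)}$ to the leading term $\frac{2k+9}{16}$ vanishes in the large-$k$ limit. If one wished to avoid assuming Conjecture~\ref{conj:Poisson_k_limit_mean_minus_median} outright, the same conclusion would follow from any bound of the form $\bigl|\lim_{n\to\infty}(\alpha_{k,n}-\nu_k(n/\kappa)) - \frac{2k+9}{16}\bigr| = O(1/k)$, so a fully rigorous route need only control this difference to leading order rather than exactly.
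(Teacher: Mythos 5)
Your proposal is correct and matches the paper's own reasoning: the conjecture is presented there precisely as a rewriting of eq.~\eqref{eq:Poisson_k_limit_mean_minus_median}, in which the $\frac{2k+9}{16}$ terms cancel and the residual $-\frac{1}{16(2k+1)}$ manifestly tends to $0$ as $k\to\infty$. Your added remark that the conclusion only requires an $O(1/k)$ bound on the deviation from $\frac{2k+9}{16}$, rather than the exact inner limit, is a sensible (if minor) strengthening of the same approach.
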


\newpage
\setcounter{equation}{0}
\section{\label{sec:mode}Mode}
The mode of the Poisson distribution of order $k$ is denoted by $m_{k,\lambda}$ in \cite{PhilippouFibQ,GeorghiouPhilippouSaghafi,KwonPhilippou}
but for consistency with the notation in Sec.~\ref{sec:median}, we denote the mode by $m_k(\lambda)$ below.
Since the value of the mode is necessarily an integer, we employ the same policy as in Sec.~\ref{sec:median}:
we fix a value for $k\ge1$ and also fix the value of the mode, say $m_*$,
and find the set of values of $\lambda$ (or equivalently the mean $=\kappa\lambda$) for which the mode equals $m_*$.

\begin{conjecture}
\label{conj:Poisson_k_mode_int_n}
Fix $k\ge1$ and let $n\in\mathbb{N}$ and set $\lambda = n/\kappa$.
Numerical studies reveal that for all tested values $k\ge1$ and $n\ge2\kappa$, the value of the mode is
\bq
\label{eq:Poisson_k_mode_int_n}
m_k(n/\kappa) = n - \biggl\lfloor\frac{3k+5}{8}\biggr\rfloor \,.
\eq
\end{conjecture}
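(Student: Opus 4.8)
The plan is to run a saddle-point / Edgeworth analysis of the pmf near its peak, for which the only inputs are the low-order cumulants. First I would record that, since the Poisson distribution of order $k$ is compound Poisson with probability generating function $\exp\bigl(\lambda\sum_{j=1}^k(x^j-1)\bigr)$ (Sec.~\ref{sec:CompoundPoissonProof}), its cumulant generating function is $K(t)=\lambda\sum_{j=1}^k(e^{jt}-1)$, so the $r$-th cumulant is $c_r=\lambda\sum_{j=1}^k j^r$. Putting $\lambda=n/\kappa$ gives the mean $c_1=\kappa\lambda=n$ (eq.~\eqref{eq:mean_Poisson_orderk}), the variance $c_2=\sigma_k^2=\tfrac13(2k+1)n$ (eq.~\eqref{eq:variance_Poisson_orderk}), and, using the identity $\sum_{j=1}^k j^3=\kappa^2$, the third cumulant $c_3=\lambda\kappa^2=n\kappa$.

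Next I would establish a lattice local limit theorem with an Edgeworth correction for $f_k(m;n/\kappa)$ as $n\to\infty$. Aperiodicity is immediate because the $j=1$ component charges the step $1$, so the support generates $\mathbb Z$. Writing $x=(m-n)/\sigma$ with $\sigma=\sqrt{c_2}$, the expansion is $f_k(m;n/\kappa)\sim \sigma^{-1}\phi(x)\bigl[1+\tfrac16\gamma_1 H_3(x)+\cdots\bigr]$, where $\phi$ is the standard normal density, the $H_j$ are Hermite polynomials, and $\gamma_1=c_3/\sigma^3$. Differentiating and using $\tfrac{d}{dx}[\phi H_j]=-\phi H_{j+1}$, the continuous maximiser solves $H_1(x)+\tfrac16\gamma_1 H_4(x)+\cdots=0$; since $H_1(x)=x$ and $H_4(0)=3$, this yields $x^\ast=-\tfrac12\gamma_1+O(n^{-1})$, so the peak sits at
\[
m^\ast = n-\frac{c_3}{2\sigma^2}+O(n^{-1})=n-\delta_k+O(n^{-1}),\qquad \delta_k:=\frac{3\kappa}{2(2k+1)}=\frac{3k(k+1)}{4(2k+1)}.
\]
The decisive point is that $\delta_k$ is independent of $n$, matching the conjectured $n$-independent shift.

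The integer mode is then the integer nearest $m^\ast$, which for large $n$ is $n-\lfloor\delta_k+\tfrac12\rfloor$ (rounding half up), so it remains to verify the arithmetic identity $\lfloor\delta_k+\tfrac12\rfloor=\lfloor(3k+5)/8\rfloor$. A direct computation gives
\[
\Bigl(\delta_k+\tfrac12\Bigr)-\frac{3k+5}{8}=\frac{(3k+1)(k+2)}{4(2k+1)}-\frac{3k+5}{8}=\frac{k-1}{8(2k+1)}=:\varepsilon_k\in\Bigl[0,\tfrac1{16}\Bigr).
\]
Because the fractional part of $(3k+5)/8$ is a multiple of $\tfrac18$, hence at most $\tfrac78$, and $\varepsilon_k<\tfrac1{16}$, we have $\{(3k+5)/8\}+\varepsilon_k<\tfrac78+\tfrac1{16}<1$, so adding $\varepsilon_k$ cannot cross an integer and the two floors coincide for every $k\ge1$. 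The value $k=1$ is a genuine boundary case: there $\delta_1=\tfrac12$ exactly, $\lambda=n\in\mathbb N$, and the distribution has a double mode at $n-1$ and $n$; the formula selects $n-1$, consistent with eq.~\eqref{eq:Poisson_k_mode_int_n}.

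The hard part is upgrading this heuristic to a proof. Two issues dominate. First, one must control the Edgeworth remainder \emph{uniformly} in $m$ over a neighbourhood of the peak and show that the neglected terms (of order $\gamma_2$, $\gamma_1^2$, together with the lattice discreteness) cannot move the winning integer; since the shift $\delta_k$ grows like $\tfrac38 k$, the peak must be located to accuracy $o(1)$ with an error bounded away from the half-integer ambiguity $\{\delta_k\}=\tfrac12$, which requires explicit rather than merely asymptotic bounds. Second, the conjecture asserts the \emph{explicit} threshold $n\ge 2\kappa$, whereas the asymptotic argument only supplies some finite $N(k)$; pinning down $2\kappa$ (and the separate, more delicate small-$n$ regime) seems to require the non-asymptotic ratio test comparing $f_k(m+1;\lambda)/f_k(m;\lambda)$ with $1$, analysed directly through the recurrence eq.~\eqref{eq:recurrence_GPS} or its four-term form eq.~\eqref{eq:KMrec}. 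That route is elementary in principle but awkward, since eq.~\eqref{eq:recurrence_GPS} ties the ratio to $k$ preceding terms, and reconciling it with the clean shift $\delta_k$ is where I expect the real work to lie.
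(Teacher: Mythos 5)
The paper offers no proof of this statement: it is presented as a conjecture supported only by numerical solution of the recurrences, with proofs explicitly deferred to future work, so your analytic route has no counterpart in the paper to be matched against. Your heuristic itself is computationally sound: the cumulants $c_r=\lambda\sum_{j=1}^k j^r$, the Edgeworth peak shift $\delta_k=c_3/(2c_2)=3\kappa/(2(2k+1))$, and the arithmetic identity
\[
\delta_k+\tfrac12=\frac{3k+5}{8}+\frac{k-1}{8(2k+1)},\qquad \frac{k-1}{8(2k+1)}\in\Bigl[0,\tfrac{1}{16}\Bigr),
\]
all check out, and since $\mathrm{frac}\bigl((3k+5)/8\bigr)\le 7/8$ the two floors coincide, so the rounded peak is indeed $n-\lfloor(3k+5)/8\rfloor$ for every $k\ge1$. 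Beyond reproducing the conjectured formula, your derivation explains things the paper explicitly flags as unexplained: the recurrent appearance of $(3k+5)/8$ (and of the number $8$), and the $k$-only terms in the conjectured double-mode location --- your $\varepsilon_k=\frac{k-1}{8(2k+1)}$ is exactly the correction appearing in \eqref{eq:Poisson_k_asymp_beta_kn2} and \eqref{eq:Poisson_k_limit_beta_kn}, since a double mode occurs precisely when the continuous peak $\mu-\delta_k$ lands on a half-integer. It also explains why $k=1$ is the exceptional double-mode case ($\delta_1=\tfrac12$ exactly) and why no ties occur for $k>1$ at $\lambda=n/\kappa$, both of which the paper observed only empirically. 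The gaps you name at the end are the genuine ones: uniform control of the Edgeworth remainder (note $\delta_k\sim 3k/8$ grows with $k$, so the expansion must be uniform in $k$ as well as $n$, which is consistent with the conjectured threshold $n\ge2\kappa\sim k^2$ rather than some unspecified $N(k)$), and the explicit threshold $2\kappa$ itself, which asymptotics alone cannot deliver. In short, your proposal is correct as a heuristic and strictly more explanatory than the paper's numerics, but it shares the same epistemic status --- both stop short of a proof; it would, however, be a natural skeleton for proving Conjectures \ref{conj:Poisson_k_mode_int_n}, \ref{conj:Poisson_k_beta_kn_terms_in_k_only} and \ref{conj:Poisson_k_limit_beta_kn} simultaneously.
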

\noindent
Note the following:
\begin{enumerate}
\item
  Unlike the results for the median in Sec.~\ref{sec:median}, for the mode we require $n\ge2\kappa$.
\item
  Before proceeding further, we discuss the case $k=1$, the standard Poisson distribution.
  For $k=1$, then $\kappa=1$ and eq.~\eqref{eq:Poisson_k_mode_int_n} simplifies to $m_1(n) = n-1$.
  However, in this situation, i.e.~when the value of $\lambda$ is an integer $(=n)$, the Poisson distribution has a double mode at $n-1$ and $n$.
  This scenario applies only for $k=1$.
  For $k>1$, the numerical results indicate that a joint mode does not occur for any tested value $\lambda = n/\kappa$, for $n \ge 2\kappa$.
  This fact causes the case $k=1$ not to fit the general pattern for $k>1$, as will be explained below.
\item
  For $k>1$, eq.~\eqref{eq:Poisson_k_mode_int_n} establishes the relation between $n$ and the value of the (unique) mode.
\item
  For smaller values $n<2\kappa$, eq.~\eqref{eq:Poisson_k_mode_int_n} is not always valid.
  In particular eq.~\eqref{eq:Poisson_k_mode_int_n} cannot be correct for $n < \lfloor(3k+5)/8\rfloor$.  
  It is an open question what is the smallest value of $n$ for which eq.~\eqref{eq:Poisson_k_mode_int_n} is valid.
\item  
  Hence eq.~\eqref{eq:Poisson_k_mode_int_n} is an asymptotic result for sufficiently large values of $n$,
  where in this context ``sufficiently large'' means $n \ge 2\kappa$, i.e.~$\lambda\ge2$.
\item
  Observe that eq.~\eqref{eq:Poisson_k_mode_int_n} implies that for $n \ge 2\kappa$,
  a unit increase in the value of $n$ yields a unit increase in the value of the mode.
  This is not always the case if $n<2\kappa$.
  Recall Fig.~\ref{fig:median_mode_k3}, which displayed a plot of both the median (solid line) and the mode (dashed line) as a function of $\kappa\lambda$
  for the Poisson distribution of order $3$ for $0 < \kappa\lambda \le 15$.
  Then $\kappa=6$ for $k=3$.
  Unlike the median, the mode does not always increase in unit steps and the spacing of the steps is irregular for $n<2\kappa$, i.e.~$\lambda<2$,
  but for $n\ge2\kappa$, i.e. $n\ge12$, the mode increases by one unit for each unit increase in $n$.
  Note also that a step in the value of the mode indicates a bimodal distribution (the mode has two values for the same value of $\lambda$).
\end{enumerate}
Before proceeding with asymptotic formulas for large $k$ and $n$,
we note the following results for the Poisson distribution of order $k$ to have a unique mode of zero.
\begin{enumerate}
\item
Philippou \cite{PhilippouFibQ} proved that for all $k\ge1$,
the Poisson distribution of order $k$ has a unique mode of zero if $0 < \lambda < 2/(k(k+1))$, i.e.~$\kappa\lambda \in(0,1)$.
\item
The above is a sufficient but not necessary condition for the Poisson distribution of order $k$ to have a unique mode of zero.
For $k=2$, Philippou \cite{PhilippouFibQ} also proved that the Poisson distribution of order $2$ has a unique mode of zero for $0 < \lambda < \sqrt{3}-1$.
However, $\kappa=3$ for $k=2$ and $\sqrt{3}-1 \simeq 0.732 > \frac13$.
\item
Furthermore, when $\lambda = \sqrt{3}-1$, the Poisson distribution of order $2$ has double modes $0$ and $2$,
i.e.~the mode increases by {\em two} units as the value of $\lambda$ increases across $\sqrt{3}-1$.
Unlike the median, the mode does not always increase in unit steps as the value of $\lambda$ increases continuously from zero.
(But it seems to do so for $n\ge2\kappa$, see eq.~\eqref{eq:Poisson_k_mode_int_n}.)
\item  
  Kwon and Philippou \cite{KwonPhilippou} published a table where the Poisson distribution of order $k$ has double modes,
  for values of $\lambda$ in the interval $0 < \lambda < 2$ and $k = 2,3,4$.
  The first double modes to appear are at $m_k(\lambda) = 0$ and $k$, for $k=2,3,4$.
\item
  It is natural to conjecture that the first double mode occurs at $0$ and $k$ for all $k \ge 1$, but this is false.
  Numerical calculations by the author found that the first double mode occurs at $0$ and $k$ for $1 \le k \le 14$.
  However, for $k=15$, the first double mode occurs at $0$ and $25$, and are never $0$ and $k$ for all tested values $k\ge15$.
  Fig.~\ref{fig:histk15_dbl_mode} displays a plot of the histogram of $h_k(n;\lambda)$ for the Poisson distribution of order $k$
  for $\lambda \simeq 0.25023$, where there is a double mode at $0$ and $25$.
  There is a local maximum at $n=k=15$, but it is not a global maximum because $h_{15}(n;\lambda) \simeq 0.9945$ for $n=15$.
  Observe also that the histogram is not smooth, which complicates the derivation of a formula for the location of the mode.
  The histogram plot of $h_k(n;\lambda)$ is much smoother for $n\ge2\kappa$, i.e.~$\lambda\ge2$.
  Fig.~\ref{fig:histk15_lam2} displays a plot of the histogram of $h_k(n;\lambda)$ for the Poisson distribution of order $15$ for $\lambda=2$.
  Then $\kappa=120$ and the mean is $\mu=\kappa\lambda=240$ and the (unique) mode is at $m=234$, in agreement with eq.~\eqref{eq:Poisson_k_mode_int_n}.
\item
  Hence unlike the median, which is zero if and only if $\lambda \le (\ln2)/k$,
  there is no simple formula for $\sup(\lambda)$ (or $\sup(\kappa\lambda)$) such that the Poisson distribution of order $k$ has a unique mode of zero.
\end{enumerate}
However, numerical studies indicate that the value of $(\kappa\lambda)_0=\sup(\kappa\lambda)$ (for `mode of zero')
such that the Poisson distribution of order $k$ has a unique mode of zero
has an asymptotic value $(\kappa\lambda)_0 \propto k^{1.125}$, i.e.~an exponent of $9/8$.
A logarithmic graph of $(\kappa\lambda)_0$ is plotted in Fig.~\ref{fig:lamfirst_mode}, for $1 \le k \le 10^4$.
The dashed line is ($\textrm{constant}\times k^{1.125}$). It indicates a good fit for $k \gtrsim 100$.
Since $\kappa=k(k+1)/2$, the corresponding value $\sup(\lambda)$ itself scales as $k^{1.125-2} = k^{-0.875} = k^{-7/8}$.
Once again, this is a numerical observation, not proved.

This leads us to a companion observation: {\em if the median is zero then the mode is also zero.}
Recall that the median is zero if and only if $\lambda \le (\ln2)/k$.
Numerical studies confirm that this value of $\lambda$ is small enough that the mode is also zero.
This is a larger upper bound than $2/(k(k+1))$, derived in \cite{PhilippouFibQ}.
Actually, the findings in Fig.~\ref{fig:lamfirst_mode} indicate that a power of $k^{-1}$ is not the optimum.
However, for a higher power such as $k^{-7/8}$ the mode is zero but the median is not necessarily zero.

We next comment on two theorems by Georghiou, Philippou and Saghafi \cite{GeorghiouPhilippouSaghafi}.
\begin{enumerate}
\item
  The above authors proved that for $\lambda\in\mathbb{N}$ and $2\le k \le 5$, the Poisson distribution of order $k$ has a unique mode given as follows
(Theorem 2.2 in \cite{GeorghiouPhilippouSaghafi})
\bq
\label{eq:mode_Georghiou_etal_Thm2.2}
m_k(\lambda) = \frac{k(k+1)}{2}\,\lambda - \biggl\lfloor \frac{k}{2} \biggr\rfloor \,.
\eq
We compare eqs.~\eqref{eq:Poisson_k_mode_int_n} and \eqref{eq:mode_Georghiou_etal_Thm2.2}.
First, $\lfloor k/2 \rfloor = \lfloor (3k+5)/8 \rfloor$ for $2 \le k \le 5$
(but eq.~\eqref{eq:Poisson_k_mode_int_n} is not restricted to the interval $2 \le k \le 5$).
Next, eq.~\eqref{eq:mode_Georghiou_etal_Thm2.2} was stated and proved only for integers  $\lambda\in\mathbb{N}$
but eq.~\eqref{eq:Poisson_k_mode_int_n} is proposed {\em (it has not been proved)} for the superset $\kappa\lambda\in\mathbb{N}$,
i.e.~$\lambda=n/\kappa$ for $n\in\mathbb{N}$, subject to the restriction $\lambda\ge2$.
Numerical calculations show that for $2 \le k \le 5$, eq.~\eqref{eq:Poisson_k_mode_int_n} is also valid for $\lambda=1$.
Hence eqs.~\eqref{eq:Poisson_k_mode_int_n} and \eqref{eq:mode_Georghiou_etal_Thm2.2}
are equal for the conditions stated in Theorem 2.2 in \cite{GeorghiouPhilippouSaghafi}.

\item
  The above authors also proved the following bounds.
  For any integer $k\ge1$ and real $\lambda>0$, the mode of the Poisson distribution of order $k$ satisfies the inequalities
  (Theorem 2.1 in \cite{GeorghiouPhilippouSaghafi}):
\bq  
\label{eq:mode_Georghiou_etal_Thm2.1}
\bigl\lfloor \kappa\lambda \bigr\rfloor - \kappa + 1 - \delta_{k,1} \le m_k(\lambda) \le \bigl\lfloor \kappa\lambda \bigr\rfloor \,.
\eq
The expression in eq.~\eqref{eq:Poisson_k_mode_int_n} satisfies the bounds in eq.~\eqref{eq:mode_Georghiou_etal_Thm2.1}.
The bounds in eq.~\eqref{eq:mode_Georghiou_etal_Thm2.1} are trivially attained for $k=1$, whenever the value of $\lambda$ is a positive integer.
For $k>1$, the upper bound is sharp because it is attained whenever $0 < \lambda < 1/\kappa$, because $\lfloor \kappa\lambda \rfloor=0$
and as Philippou \cite{PhilippouFibQ} proved, the value of the mode is then zero.
The lower bound is not sharp because its value is negative when $\lfloor \kappa\lambda \rfloor < \kappa - 1 + \delta_{k,1}$
(this was noted in \cite{GeorghiouPhilippouSaghafi}).
We sharpen it by imposing a floor of zero.
The revised lower bound is then also attained whenever $0 < \lambda < 1/\kappa$, because the mode is zero and the lower bound is also zero.
With this modification, we obtain sharp bounds for the mode.
\end{enumerate}
\begin{proposition}
  (Rewording of lower bound of Theorem 2.1 in \cite{GeorghiouPhilippouSaghafi}.)
  For any integer $k\ge1$ and real $\lambda>0$, the mode of the Poisson distribution of order $k$ satisfies the sharp inequalities
\bq  
\label{eq:mode_Georghiou_etal_Thm2.1_me}
\max\bigl\{0, \bigl\lfloor \kappa\lambda \bigr\rfloor - \kappa + 1 - \delta_{k,1} \bigr\} \le m_k(\lambda) \le \bigl\lfloor \kappa\lambda \bigr\rfloor \,.
\eq
\end{proposition}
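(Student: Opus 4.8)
The plan is to derive the claim directly from the already-established bounds \eqref{eq:mode_Georghiou_etal_Thm2.1}, since the proposition differs from them only in the floored lower bound. The single new ingredient is the elementary fact that the mode is a nonnegative integer.

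First I would note that $m_k(\lambda)$, being the location of the global maximum of a probability mass function supported on $\{0,1,2,\dots\}$, satisfies $m_k(\lambda)\ge 0$ for every integer $k\ge1$ and real $\lambda>0$. Combining this trivial bound with the lower bound $m_k(\lambda)\ge\lfloor\kappa\lambda\rfloor-\kappa+1-\delta_{k,1}$ supplied by Theorem 2.1 in \cite{GeorghiouPhilippouSaghafi} yields $m_k(\lambda)\ge\max\{0,\lfloor\kappa\lambda\rfloor-\kappa+1-\delta_{k,1}\}$, which is exactly the lower bound in \eqref{eq:mode_Georghiou_etal_Thm2.1_me}. The upper bound $m_k(\lambda)\le\lfloor\kappa\lambda\rfloor$ is carried over verbatim from \eqref{eq:mode_Georghiou_etal_Thm2.1}, so the two-sided inequality is immediate.

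It remains to verify sharpness. The natural test case is the regime $0<\kappa\lambda<1$, i.e.\ $0<\lambda<1/\kappa$. There $\lfloor\kappa\lambda\rfloor=0$, and by Philippou's result \cite{PhilippouFibQ} the distribution has a unique mode equal to zero, so $m_k(\lambda)=0$. This simultaneously attains the upper bound $\lfloor\kappa\lambda\rfloor=0$ and the lower bound: since $\kappa\ge1$ and $\delta_{k,1}\ge0$, the second argument $1-\kappa-\delta_{k,1}$ of the maximum is $\le0$, so $\max\{0,\lfloor\kappa\lambda\rfloor-\kappa+1-\delta_{k,1}\}=0=m_k(\lambda)$. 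Hence both inequalities are sharp.

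The only point requiring care -- and the sole genuine content beyond a relabeling of \eqref{eq:mode_Georghiou_etal_Thm2.1} -- is the sharpness of the floored lower bound. Validity of the inequality follows for free from nonnegativity, but \emph{attainment} relies crucially on Philippou's precise statement that the mode is exactly zero throughout $0<\lambda<1/\kappa$; without that quantitative input one would only know the bound holds, not that it is met with equality. I expect no further obstacle, since everything else is an immediate consequence of the cited theorem.
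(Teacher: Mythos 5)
Your proof is correct and follows essentially the same route as the paper: carry over the bounds of Theorem 2.1 in \cite{GeorghiouPhilippouSaghafi}, justify the floor via nonnegativity of the mode, and establish sharpness by attainment in the regime $0<\lambda<1/\kappa$ where Philippou's result forces $m_k(\lambda)=0=\lfloor\kappa\lambda\rfloor$. The only (cosmetic) difference is that the paper treats $k=1$ separately, noting both original bounds are attained at integer $\lambda$ via the double mode, whereas you handle all $k\ge1$ uniformly with the single test case $\kappa\lambda\in(0,1)$.
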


In the rest of this section, we restrict attention to $n\ge2\kappa$.
Then, for fixed $n\ge2\kappa$, we determine the interval of values of $\lambda$ such that the mode is given by $m_k(n/\kappa)$ in eq.~\eqref{eq:Poisson_k_mode_int_n}.
Analogous to the analysis for the median, we define a set $\{\beta_{k,n}|n\ge2\kappa\}$ as follows:
for $n\ge2\kappa+1$, the mode $m_k(n/\kappa)$ is given by eq.~\eqref{eq:Poisson_k_mode_int_n} and the value of the mean $\mu_k(\lambda)$ lies in the interval
$\mu_k(\lambda) \in (\beta_{k,n-1}, \beta_{k,n}]$.
By construction, if $\mu_k(\lambda) = \beta_{k,n}$, the Poisson distribution of order $k$ has a double mode at $m_k(n/\kappa)$ and $m_k(n/\kappa)+1$.
Hence, for any $k > 1$, there is a denumerable infinity of values of $\lambda$ for which the Poisson distribution of order $k$ has a double mode,
and (for $n\ge2\kappa$) the double mode consists of consecutive integers.
{\em (This is again a numerical observation, not formally proved.)}
As already stated, it is not known if the Poisson distribution of order $k$ has three or more joint modes.

For $k=1$, the exact solution is $\beta_{1,n} = n$, because the double modes occur at $\lambda = n-1$ and $n$.
This fact complicates matters in the sense the case $k=1$ does not fit into the patterns below for $k>1$.
This fact should be borne in mind when reading the asymptotic results and conjectures below.
In particular, for $k>1$ we require $n\ge2\kappa$ for $\beta_{k,n}$ to be well-defined.
We saw above that the mode is zero for all integers $0 \le n \le \lfloor (\kappa\lambda)_0\rfloor$.
Hence for $n < 2\kappa$ there may be several values of $n$ which share the same value of the mode.
Numerical calculations indicate that the asymptotic value of $\beta_{k,n}$ is
\bq
\label{eq:Poisson_k_asymp_beta_kn2}
\beta_{k,n} = n + \textrm{frac}\Bigl(\frac{3k+5}{8}\Bigr) +\frac{k-1}{8(2k+1)} + B_{k,n} \,.
\eq
Here
\bq
\label{eq:Poisson_k_mode_Bkn}
B_{k,n} = \biggl(\frac{\kappa}{16 +\frac89} - \frac{1}{13+\frac23}\biggr)\frac{1}{n}
+ \biggl\lfloor\frac{3k+5}{8}\biggr\rfloor \,\frac{3\kappa}{50n^2} +\cdots
\eq
Note the following:
\begin{enumerate}
\item
  The terms which depend only on $k$ seem to be exact.
  In particular, for $k=1$ eq.~\eqref{eq:Poisson_k_asymp_beta_kn2} yields
  $\textrm{frac}((3k+5)/8) + (k-1)/(8(2k+1)) = 0$, which is the correct value.
\item
  The dependence on $n$ in $B_{k,n}$ is approximate, despite its seeming detail.
  The denominator numbers $16\frac89$ and $13\frac23$ were chosen to give the best fit to the numerical data.
  For example, for $k=1$ it is known that $\beta_{1,n}=0$, hence the expression for $B_{1,n}$ cannot be correct.
  The residuals in the numerical data (the difference between the numerical results and the fit using eq.~\eqref{eq:Poisson_k_asymp_beta_kn2})
  exhibit additional dependence of $O(1/n^2)$, but the residual terms are small and difficult to quantify.
\item
  For values (i) $k=10$ and $\lambda \in [100,1000]$ or (ii) $\lambda=100$ and $k \in [10,100]$,
  the difference between eq.~\eqref{eq:Poisson_k_asymp_beta_kn2} and the numerical data is $O(10^{-6})$.
  Even for $k=2$ and $\lambda \in [50,1000]$ the difference is $O(10^{-5})$.
\item
  Notice that the expression ``$(3k+5)/8$'' appears frequently in formulas pertaining to the mode. The reason for this is not known.
\end{enumerate}
The asymptotic upper bound for the difference between the mean and the mode is (again, only for $k>1$, and better for $k\ge10$)
\bq
\label{eq:Poisson_k_asymp_mean_minus_mode}
\begin{split}
\beta_{k,n} - m_k(n/\kappa) &= \frac{3k+5}{8} +\frac{k-1}{8(2k+1)} + B_{k,n}
\\
&= \frac{6k+11}{16} -\frac{3}{16(2k+1)} + B_{k,n} \,.
\end{split}
\eq
Hence we state the following conjectures.
\begin{conjecture}
\label{conj:Poisson_k_beta_kn_terms_in_k_only}
The terms in eqs.~\eqref{eq:Poisson_k_asymp_beta_kn2} and \eqref{eq:Poisson_k_asymp_mean_minus_mode} which depend only on $k$ are exact.
\end{conjecture}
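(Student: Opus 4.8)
The plan is to prove the sharper statement that
\bq
\lim_{n\to\infty}\bigl(\beta_{k,n} - m_k(n/\kappa)\bigr) = \frac{6k+11}{16} - \frac{3}{16(2k+1)} \,,
\eq
from which the exactness of the $k$-only terms in both eqs.~\eqref{eq:Poisson_k_asymp_beta_kn2} and \eqref{eq:Poisson_k_asymp_mean_minus_mode} follows at once, the two displayed forms being algebraically identical once eq.~\eqref{eq:Poisson_k_mode_int_n} is used. The starting point is the representation $Y_{k,\lambda} = \sum_{j=1}^k j\,N_j$, where $N_1,\dots,N_k$ are independent $\textrm{Poisson}(\lambda)$ variables; this is immediate from the product form $f(x) = \prod_{j=1}^k \exp(\lambda(x^j-1))$ of the pgf in eq.~\eqref{eq:Adelson_fx_series}. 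Hence the $r$-th cumulant is $\kappa_r = \lambda S_r$ with $S_r := \sum_{j=1}^k j^r$; in particular $S_2 = \frac16 k(k+1)(2k+1)$ (cf.~eq.~\eqref{eq:variance_Poisson_orderk}) and $S_3 = \kappa^2$.

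First I would establish a refined local limit (Edgeworth) expansion for the lattice variable $Y_{k,\lambda}$ as $\lambda\to\infty$, equivalently $n\to\infty$ since $\mu = \kappa\lambda$. With $z = (x-\mu)/\sigma$ and $\sigma^2 = \lambda S_2$, the pmf is approximated near its peak by the density $p(x) = \sigma^{-1}\phi(z)\bigl[1 + \frac{\gamma_1}{6}\mathrm{He}_3(z) + \cdots\bigr]$, where $\phi$ is the standard normal density, the $\mathrm{He}_j$ are Hermite polynomials, and $\gamma_1 = \kappa_3/\sigma^3 = S_3 S_2^{-3/2}\lambda^{-1/2}$. Solving $\frac{d}{dx}\ln p = 0$ locates the continuous mode at $x_{\rm mode} = \mu - \frac{\kappa_3}{2\sigma^2} + O(1/n) = \mu - \frac{S_3}{2S_2} + O(1/n)$. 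The decisive structural point is that every standardized cumulant $\gamma_r$ ($r\ge3$) carries a factor $\lambda^{1-r/2}$, so the \emph{only} $\lambda$-independent contribution to $\mu - x_{\rm mode}$ is the leading skewness term $S_3/(2S_2)$, all other Edgeworth corrections being $O(1/n)$.

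Next I would convert the double-mode condition $f_k(m;\lambda) = f_k(m+1;\lambda)$---which by construction fixes $\mu = \beta_{k,n}$ at $m = m_k(n/\kappa)$---into a statement about $x_{\rm mode}$. Writing $L(x) := \ln p(x)$ and using the midpoint expansion $L(m+1) - L(m) = L'(m+\half) + \frac{1}{24}L'''(m+\half) + \cdots$, the vanishing of the discrete difference gives $m + \half = x_{\rm mode} - \frac{1}{24}\,L'''/L''\big|_{x_{\rm mode}} + \cdots$. Since $L''(x_{\rm mode}) = -\sigma^{-2}(1+o(1))$ while $L'''(x_{\rm mode}) = O(\gamma_1/\sigma^3)$, the ratio $L'''/L''$ is $O(1/n)$, whence $m + \half = x_{\rm mode} + O(1/n)$. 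Combining the two steps yields $\mu - m = \frac{S_3}{2S_2} + \half + O(1/n)$, and the elementary simplification $\frac{S_3}{2S_2} + \half = \frac{3k(k+1)}{4(2k+1)} + \half = \frac{6k+11}{16} - \frac{3}{16(2k+1)}$ gives the claimed value; letting $n\to\infty$ removes the remainder. As a consistency check, $k=1$ gives $S_3/(2S_2) = \half$ and hence the limit $1$, in agreement with the exact facts $\beta_{1,n} = n$ and $m_1(n) = n-1$.

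The hard part will be establishing the local Edgeworth expansion with error terms that are uniform in $n$ near the mode and, more delicately, \emph{smooth} enough in $n$ that the discrete difference $\ell(m+1)-\ell(m)$ (with $\ell = \ln f_k$) is represented by $L(m+1)-L(m)$ up to $O(1/n^2)$; one must rule out a lattice-periodic remainder of size $O(1/n)$ in $\ell - L$ that could inject a spurious $\lambda$-independent shift. The Poisson-sum representation makes the saddle-point machinery for sums of independent Poisson variables available, but the bookkeeping that certifies that the identity $\kappa_r = \lambda S_r$ truly confines every correction beyond the skewness term to $O(1/n)$ is where the real work lies. By contrast, I expect the exact $O(1/n)$ coefficient---the leading term of $B_{k,n}$ in eq.~\eqref{eq:Poisson_k_mode_Bkn}---to be substantially harder to isolate, consistent with the paper's observation that its $n$-dependence is only approximate.
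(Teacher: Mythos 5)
There is no paper proof to compare against here: the statement is one of the paper's numerically-motivated conjectures, and the paper explicitly defers all proofs to future work. So your proposal has to be judged on its own terms, and on those terms it is the right program, with correct algebra at every step I can check. The decomposition $Y_{k,\lambda}=\sum_{j=1}^k jN_j$ with independent $\mathrm{Poisson}(\lambda)$ summands does give $\kappa_r=\lambda S_r$; the skewness correction does place the continuous peak at $\mu-S_3/(2S_2)+O(1/n)$; the central-difference (midpoint) argument does give $m+\frac{1}{2}=x_{\rm mode}+O(1/n)$ at a double-mode point; and the identity
\begin{equation*}
\frac{S_3}{2S_2}+\frac{1}{2}=\frac{3k(k+1)}{4(2k+1)}+\frac{1}{2}=\frac{3k^2+7k+2}{4(2k+1)}=\frac{6k+11}{16}-\frac{3}{16(2k+1)}
\end{equation*}
reproduces exactly the $k$-only constant in eq.~\eqref{eq:Poisson_k_asymp_mean_minus_mode}, with the $k=1$ sanity check ($\beta_{1,n}=n$, $m_1(n)=n-1$, limit $=1$) passing. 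Your structural point---every standardized cumulant of order $r\ge3$ carries $\lambda^{1-r/2}$, so only the skewness term can survive the $n\to\infty$ limit---is precisely the explanation of \emph{why} the $k$-only terms should be exact while the coefficients in $B_{k,n}$ are merely empirical fits. If completed, the argument would prove not only this conjecture but the limits in Conjectures \ref{conj:Poisson_k_limit_beta_kn} and \ref{conj:Poisson_k_limit_mean_minus_mode} as well.

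Two gaps separate the sketch from a proof, and you should be aware that they are of different severity. First, the analytic core: the local (pmf) Edgeworth expansion with relative error $o(1/n)$ uniformly near the peak. This is labor rather than principle: $Y_{k,\lambda}$ is a span-1 lattice variable, for integer $\lambda$ it is a sum of $\lambda$ i.i.d.\ copies of $Y_{k,1}$, so Petrov-type local expansions apply, and for real $\lambda$ the explicit characteristic function $\exp\bigl(\lambda\sum_{j=1}^k(e^{ijt}-1)\bigr)$ lets the Fourier-inversion argument go through unchanged. Moreover the Esseen-type periodic corrections you worry about occur in the expansion of the distribution function of lattice variables, not of the point masses, so the feared lattice-periodic $O(1/n)$ shift in $\ell-L$ is ruled out by classical theory. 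Second, and more substantive: exactness of the $k$-only terms in eq.~\eqref{eq:Poisson_k_asymp_beta_kn2} (the $\beta_{k,n}-n$ form) does \emph{not} follow from your limit alone, because the passage from $\beta_{k,n}-m$ to $\beta_{k,n}-n$ invokes the mode formula eq.~\eqref{eq:Poisson_k_mode_int_n}, which is itself an unproven conjecture of the paper. Your framework can close this too---for $\mu=n$ the discrete mode is $\lceil x_{\rm mode}-\frac{1}{2}\rceil$ up to $O(1/n)$, and one checks that $\mathrm{frac}\bigl(\frac{3k-3}{8}\bigr)+\frac{k-1}{8(2k+1)}$ lies strictly between $0$ and $1$ for all $k\ge2$, so the rounding lands on $n-\lfloor(3k+5)/8\rfloor$ for $n$ large (the boundary case is exactly $k=1$, which is why $k=1$ behaves exceptionally)---but that step must be made part of the proof rather than borrowed from the paper, since the paper proves nothing it could be borrowed from.
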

\noindent
We state the following bound for $k>1$.
\begin{conjecture}
\label{conj:Poisson_k_bounds_beta_kn}
For fixed $k>1$ and $n \ge 2\kappa$, we claim $\beta_{k,n}-n \in(0,1)$.
This is not always true if $n<2\kappa$.
\end{conjecture}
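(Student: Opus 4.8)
The plan is to convert the statement into a sign change of a single auxiliary function and then to trap that sign change between two consecutive integer values of the mean. Write $M:=\lfloor(3k+5)/8\rfloor$ and $L:=n-M$, so that eq.~\eqref{eq:Poisson_k_mode_int_n} identifies $L$ as the mode at $\lambda=n/\kappa$, and set $g(\lambda):=f_k(L;\lambda)-f_k(L+1;\lambda)$. Since $f_k(m;\lambda)=e^{-k\lambda}h_k(m;\lambda)$ with $h_k(m;\cdot)$ a polynomial, $g$ is real-analytic in $\lambda$ and its sign agrees with that of the polynomial $D(\lambda):=h_k(L;\lambda)-h_k(L+1;\lambda)$. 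By the construction of the family $\{\beta_{k,n}\}$, the value $\mu=\beta_{k,n}$ is exactly the mean at which $f_k(L)$ and $f_k(L+1)$ become the two equal top probabilities, i.e.\ the mean attached to the root of $g$ that separates mode $L$ from mode $L+1$. Hence the whole claim $\beta_{k,n}-n\in(0,1)$ is equivalent to placing this root strictly between $\mu=n$ and $\mu=n+1$, which by continuity of $g$ in $\lambda$ reduces to the two strict inequalities $g(n/\kappa)>0$ and $g((n+1)/\kappa)<0$, plus the absence of any further sign change of $g$ on the intervening interval.

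If one is willing to invoke the results already stated, there is a short route: granting Conjecture~\ref{conj:Poisson_k_mode_int_n} together with its accompanying observation that the mode is unique at every $\lambda=n/\kappa$ with $n\ge2\kappa$, one has $f_k(L)>f_k(L+1)$ strictly at $\mu=n$ and $f_k(L+1)>f_k(L)$ strictly at $\mu=n+1$; combined with the monotone unit-step structure that defines $\{\beta_{k,n}\}$, this puts $\mu=n$ strictly inside $(\beta_{k,n-1},\beta_{k,n})$ and $\mu=n+1$ strictly inside $(\beta_{k,n},\beta_{k,n+1})$, whence $n<\beta_{k,n}<n+1$. What this route still owes is (i) monotonicity of the mode in $\lambda$, so that each level set $\{\lambda:\text{mode}=L\}$ is a genuine interval, and (ii) strict unimodality (no tie $f_k(L)=f_k(L+1)$) precisely at the arithmetically special points $\lambda=n/\kappa$. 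I would address (i) through a stochastic-monotonicity or $\partial_\lambda\log f_k$ argument built on the representation $Y_{k,\lambda}=\sum_{j=1}^{k} jX_j$ with $X_j\sim\mathrm{Poisson}(\lambda)$ independent, and (ii) by showing $D(n/\kappa)\neq0$, which is the same non-vanishing that the self-contained route below makes quantitative.

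The self-contained approach is to establish $g(n/\kappa)>0$ and $g((n+1)/\kappa)<0$ directly from sharp local asymptotics of the pmf near its peak. From the representation above, $Y_{k,\lambda}$ is a lattice sum of independent scaled Poisson variables with mean $\mu=\kappa\lambda$, variance $\sigma^2=\tfrac16 k(k+1)(2k+1)\lambda$ and third cumulant $\kappa_3=\kappa^2\lambda$ (using $\sum_{j=1}^k j^3=\kappa^2$). A local central limit theorem with one Edgeworth correction gives, for $m$ in the bulk, $f_k(m+1;\lambda)/f_k(m;\lambda)=\exp\{-(m-\mu+\tfrac12)/\sigma^2-\kappa_3/(2\sigma^4)+\cdots\}$, so the consecutive ratio crosses $1$ a distance $\tfrac12+\kappa_3/(2\sigma^2)=\tfrac12+\tfrac{3k(k+1)}{4(2k+1)}=\tfrac{(3k+1)(k+2)}{4(2k+1)}$ below the mean. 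This quantity equals $\tfrac{6k+11}{16}-\tfrac{3}{16(2k+1)}$ and reproduces \emph{exactly} the $k$-only part of eq.~\eqref{eq:Poisson_k_asymp_mean_minus_mode}, which is itself strong evidence for Conjecture~\ref{conj:Poisson_k_beta_kn_terms_in_k_only}; taking fractional parts, the $k$-only part of $\beta_{k,n}-n$ equals $\text{frac}\bigl(\tfrac{(3k+1)(k+2)}{4(2k+1)}\bigr)$, which a direct check shows lies strictly in $(0,1)$ for every $k>1$. Evaluating the crossing condition at $m=L=n-M$ with $\mu=n$ and then $\mu=n+1$ converts the two desired sign inequalities into the single two-sided statement that $M=\lfloor(3k+5)/8\rfloor$ is the correct integer part of this mean-to-mode gap.

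The main obstacle is to make these leading-order computations rigorous and uniform. The gap $f_k(L)-f_k(L+1)$ near the peak is only of relative size $O(1/\mu)$, so fixing its sign at the two boundary integers requires a local central limit theorem with explicit error terms strong enough to dominate the $O(1/\mu)$ and $O(1/\mu^2)$ contributions that are bundled, only approximately, into $B_{k,n}$ of eq.~\eqref{eq:Poisson_k_mode_Bkn}; in particular one must show these corrections never push $\beta_{k,n}-n$ out of $(0,1)$ for any $n\ge2\kappa$, not merely in the limit $n\to\infty$. Two sub-points carry the difficulty: controlling the Edgeworth remainder uniformly for $n\ge2\kappa$ (where $m/\sigma$ is $O(1)$ but not small), and ruling out an exact tie $D(n/\kappa)=0$, a number-theoretic non-vanishing equivalent to strict unimodality at the special abscissae. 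Once both the strict sign conditions and this non-vanishing are secured, continuity of $g$ supplies a root in the open interval, strictness excludes the endpoints, and local unimodality of the pmf at its maximum identifies that root as $\beta_{k,n}$, yielding $\beta_{k,n}-n\in(0,1)$.
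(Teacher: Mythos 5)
First, a point of status: the statement you are addressing is one of the paper's \emph{conjectures}, supported only by numerical fitting; the paper explicitly defers all proofs to future work, so there is no paper proof to compare against, and your attempt must stand on its own. By your own admission it does not yet do so --- both of your routes terminate in unproven hypotheses. That said, the Edgeworth computation at its core is correct and is genuinely valuable, because it goes beyond what the paper has: with $\kappa_3=\kappa^2\lambda$ and $\sigma^2=\tfrac16k(k+1)(2k+1)\lambda$, your crossing distance $\tfrac12+\kappa_3/(2\sigma^2)=\tfrac{(3k+1)(k+2)}{4(2k+1)}$ does equal $\tfrac{6k+11}{16}-\tfrac{3}{16(2k+1)}$, i.e.\ exactly the $k$-only part of eq.~\eqref{eq:Poisson_k_asymp_mean_minus_mode}; it degenerates to $1$ at $k=1$ (explaining why $\beta_{1,n}-n=0$ falls outside the pattern); and its fractional part, $\textrm{frac}\bigl(\tfrac{3k+5}{8}\bigr)+\tfrac{k-1}{8(2k+1)}$, lies in $\bigl(0,\tfrac{15}{16}\bigr)$ for every $k>1$, since the second term is positive and below $\tfrac{1}{16}$ while the first is at most $\tfrac78$. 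This is a theoretical derivation of constants the paper obtained only by numerical fitting, hence real support for Conjectures \ref{conj:Poisson_k_beta_kn_terms_in_k_only}, \ref{conj:Poisson_k_limit_beta_kn} and \ref{conj:Poisson_k_limit_mean_minus_mode}. But it establishes the present statement only in the limit $n\to\infty$, whereas the conjecture's entire content is a uniform claim for every finite $n\ge2\kappa$.

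The genuine gaps are the two you name, and they are not minor technicalities but precisely where the difficulty lives. (i) Your reduction to two sign conditions on $g(\lambda)=f_k(L;\lambda)-f_k(L+1;\lambda)$ identifies the root of $g$ with $\beta_{k,n}$ only if the level sets $\{\lambda: m_k(\lambda)=L\}$ are intervals; this mode-monotonicity is unproven, and the paper's own data show it \emph{fails} for small $\lambda$ (for $k=2$ the mode jumps from $0$ to $2$ at $\lambda=\sqrt3-1$; for $k=15$ the first double mode is at $0$ and $25$, cf.\ Fig.~\ref{fig:histk15_dbl_mode}), so any proof must isolate what changes for $\lambda\ge2$ --- for instance log-concavity of $n\mapsto f_k(n;\lambda)$ in that regime (it is manifestly false for small $\lambda$), perhaps extracted from the recurrence \eqref{eq:recurrence_GPS}. (ii) The uniform error control is quantitatively demanding: in the worst case $k\equiv6\pmod 8$ the $k$-only part of $\beta_{k,n}-n$ sits within $\tfrac{k+2}{8(2k+1)}\downarrow\tfrac1{16}$ of the endpoint $1$, so you must bound the finite-$n$ remainder (what eq.~\eqref{eq:Poisson_k_mode_Bkn} models with \emph{fitted, not exact} coefficients) by roughly $0.06$ in absolute value, with sign control, all the way down to $n=2\kappa$; the paper's Conjecture \ref{conj:Poisson_k_beta_kn_fix_k_dec_n} (non-monotonicity of $\beta_{k,n}-n$ for $k=2$, $n<15$) is a warning that these corrections are not sign-definite. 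No off-the-shelf local CLT or Edgeworth theorem supplies explicit constants of that quality at $\mu=2\kappa$; you would need a bespoke saddle-point analysis of the generating function $\exp\bigl(\lambda(x+\dots+x^k)-k\lambda\bigr)$ with computable remainders. In short, your skeleton (sign change trapped between consecutive integer means) is the right one and your heuristic pins the constants, but the monotonicity lemma and the uniform finite-$n$ bounds are both missing, and without them the argument proves only the limiting statement, not the conjecture as posed.
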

\noindent
Next, for $k=1$, it is known that $\beta_{1,n} -n=0$ for all $n\ge0$.
It is not so clear-cut for $k>1$.
For $k=2$ and $2\kappa \le n < 5\kappa$, i.e.~$6 \le n < 15$, it is {\em false} that the value of $\beta_{k,n} -n$ is a decreasing function of $n$.
For $k=3$ and $k=4$, there are also problems of non-monotonicity if $n < 3\kappa$.
We make the following conjecture for $k>1$.
\begin{conjecture}
\label{conj:Poisson_k_beta_kn_fix_k_dec_n}
For (i) $k=2$ and $n\ge5\kappa$ (i.e.~$n\ge15$)
or (ii) $k\in[3,4]$ and $n\ge3\kappa$ 
or (iii) $k\ge5$ and $n\ge2\kappa$,
the value of $\beta_{k,n} -n$ is a decreasing function of $n$.
\end{conjecture}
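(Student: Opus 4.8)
The plan is to reduce the monotonicity claim to a statement about the spacing of the critical means at which double modes occur, and then to attack that spacing through a local central-limit / Edgeworth analysis of the pmf. Writing $c_k = \lfloor(3k+5)/8\rfloor$ so that the mode is $m = n - c_k$ by eq.~\eqref{eq:Poisson_k_mode_int_n}, and letting $M_k(m)$ denote the value of the mean $\kappa\lambda$ at which $f_k(m;\lambda) = f_k(m+1;\lambda)$ (the double-mode mean for the consecutive pair $(m,m+1)$), one has $\beta_{k,n} = M_k(n-c_k)$. Since $\bigl(\beta_{k,n+1}-(n+1)\bigr) - \bigl(\beta_{k,n}-n\bigr) = \beta_{k,n+1} - \beta_{k,n} - 1$, the assertion that $\beta_{k,n}-n$ is decreasing is \emph{exactly} the statement $M_k(m+1) - M_k(m) < 1$: consecutive double-mode means must be spaced by strictly less than one unit.

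First I would extract a workable characterization of $M_k(m)$ from the recurrence eq.~\eqref{eq:recurrence_GPS}. Writing $\rho_i = f_k(m-i;\lambda)/f_k(m;\lambda)$ and applying eq.~\eqref{eq:recurrence_GPS} at both $m$ and $m+1$, the double-mode condition $f_k(m)=f_k(m+1)$ collapses to the single implicit equation
\[
\frac{1}{\lambda} = \sum_{i=0}^{k-1}\rho_i - k\,\rho_k \,,
\]
which defines $\lambda = \Lambda_k(m)$ and hence $M_k(m) = \kappa\Lambda_k(m)$. The $\rho_i$ are the local ratios of the pmf near its peak, so the next step is to evaluate them asymptotically. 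Because $Y_{k,\lambda}\sim\textrm{Poisson}(k,\lambda)$ is the compound sum $\sum_{j=1}^k jN_j$ with $N_j$ i.i.d.\ Poisson$(\lambda)$, its cumulants are explicit, $\kappa_r = \lambda\sum_{j=1}^k j^r$; in particular the variance is eq.~\eqref{eq:variance_Poisson_orderk} and the third cumulant is $\mu_3 = \lambda\kappa^2$. A local CLT with Edgeworth corrections then gives the peak location, and the skewness term alone already reproduces the conjectured constant: the continuous mode sits a distance $\mu_3/(2\sigma^2) = 3k(k+1)/\bigl(4(2k+1)\bigr)$ below the mean, so at a double mode (continuous peak at $m+\tfrac12$) the mean exceeds $m$ by $\tfrac12 + 3k(k+1)/\bigl(4(2k+1)\bigr) = \frac{3k^2+7k+2}{4(2k+1)}$, which matches the $k$-dependent constant of eq.~\eqref{eq:Poisson_k_asymp_beta_kn2}. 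This fixes $M_k(m) = m + \tfrac12 + 3k(k+1)/\bigl(4(2k+1)\bigr) + R_k(m)$ with a remainder $R_k(m)\to0$.

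The monotonicity is then governed entirely by $R_k$: since the constant drops out of the difference, $M_k(m+1)-M_k(m) = 1 + \bigl(R_k(m+1)-R_k(m)\bigr)$, so I must show that $R_k$ is eventually strictly decreasing. Carrying the Edgeworth expansion one order further, the leading correction to the peak location is $O(1/\sigma^2) = O(\kappa/m)$ with a coefficient built from the fourth cumulant $\lambda\sum_j j^4$ and the square of the skewness; this is the honest counterpart of the fitted $B_{k,n}$ term in eq.~\eqref{eq:Poisson_k_mode_Bkn}, and the task is to verify that its true coefficient is positive (so that $R_k(m)\downarrow 0$), not to match the approximate constants quoted there. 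Establishing a clean, signed $1/m$ term together with a controlled $O(1/m^2)$ tail would yield $R_k(m+1)-R_k(m)<0$ for all large $m$.

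The hard part will be converting ``all large $m$'' into the \emph{exact} thresholds asserted in the conjecture, and this is where a purely asymptotic argument is insufficient. The three regimes --- $k=2$ with $n\ge5\kappa$, $k\in[3,4]$ with $n\ge3\kappa$, and $k\ge5$ with $n\ge2\kappa$ --- and especially the failure of monotonicity for $k=2$ in $2\kappa\le n<5\kappa$, show that the onset of monotonicity is a genuinely finite-$n$ phenomenon not captured by leading-order asymptotics. I would therefore pursue a hybrid strategy: (i) derive fully explicit, uniform-in-$k$ remainder bounds for the local expansion, tight enough to certify $R_k(m+1)<R_k(m)$ down to $m = 2\kappa - c_k$ for $k\ge5$ (and the larger thresholds for small $k$); and (ii) for the finitely many residual boundary cases --- small $k$ and $n$ just above the threshold --- verify the strict inequality by direct computation of the $\beta_{k,n}$ from the four-term recurrence eq.~\eqref{eq:KMrec}, which makes the pmf near the mode exactly and cheaply computable. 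The principal obstacle is step (i): obtaining effective Edgeworth remainder bounds that are simultaneously uniform in $k$ and sharp enough at the stated thresholds, since the thresholds themselves scale with $\kappa$ and the small-$k$ cases (the $k=2$ boundary at $n=5\kappa$ in particular) leave very little margin.
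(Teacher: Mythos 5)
First, note that the paper contains no proof of this statement: it is presented as a conjecture supported purely by numerical fits, with proofs explicitly deferred to future work. So your proposal cannot be measured against a paper argument; it can only be judged as a strategy, and as such it goes beyond anything in the paper. Two pieces of it are genuinely solid. The reduction is correct: writing $c_k=\lfloor(3k+5)/8\rfloor$, since $\beta_{k,n}=M_k(n-c_k)$ with $c_k$ independent of $n$, monotonicity of $\beta_{k,n}-n$ is equivalent to the spacing bound $M_k(m+1)-M_k(m)<1$, and your implicit equation $1/\lambda=\sum_{i=0}^{k-1}\rho_i-k\rho_k$ does follow from eq.~\eqref{eq:recurrence_GPS} by an Abel summation once $P_{m+1}=P_m$ is imposed. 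Your skewness computation also checks out exactly: $\tfrac12+3k(k+1)/\bigl(4(2k+1)\bigr)=(3k^2+7k+2)/\bigl(4(2k+1)\bigr)=(3k+5)/8+(k-1)/\bigl(8(2k+1)\bigr)$, which is precisely the $k$-dependent constant implicit in eq.~\eqref{eq:Poisson_k_asymp_beta_kn2}. This analytic derivation of the paper's fitted constant is something the paper itself lacks, and is real evidence for Conjecture \ref{conj:Poisson_k_beta_kn_terms_in_k_only}.

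Nevertheless the proposal is a plan, not a proof, and the gaps sit exactly where the difficulty lives. (a) You assume $M_k(m)$ is well defined, i.e.\ that $f_k(m;\lambda)=f_k(m+1;\lambda)$ has a unique root in $\lambda$ in the relevant range and that this crossing is a genuine double \emph{global} mode; this rests on the unproven eq.~\eqref{eq:Poisson_k_mode_int_n} and on the paper's (also only numerical) observation that for $n\ge2\kappa$ consecutive double modes are adjacent integers. (b) The entire monotonicity claim is carried by the remainder $R_k(m)$, i.e.\ by the \emph{next} Edgeworth order; you have not computed the sign of its $1/m$ coefficient, only noted that the paper's fitted $B_{k,n}$ in eq.~\eqref{eq:Poisson_k_mode_Bkn} is positive, and a fitted constant is not a substitute for the cumulant computation. (c) Most seriously, an Edgeworth argument yields ``for all sufficiently large $m$,'' while the conjecture asserts sharp finite thresholds ($n\ge5\kappa$ for $k=2$, $n\ge3\kappa$ for $k\in[3,4]$, $n\ge2\kappa$ for $k\ge5$), and the documented failure of monotonicity just below these thresholds shows the margin is thin; effective remainder bounds, uniform in $k$ and sharp at $m\approx2\kappa-c_k$, are not sketched and are not known to be attainable, and the fallback of finite computation via eq.~\eqref{eq:KMrec} can only handle finitely many $(k,n)$, whereas regime (iii) involves infinitely many $k$ at the critical scale $n\asymp2\kappa$. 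Until (a)--(c) are supplied, your proposal remains a promising program whose completed portion (the leading-order constant) corroborates, but does not prove, the conjecture.
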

\noindent
Again for $k=1$, the exact result $\beta_{1,n}-n=0$ is known.
We state the following limit and lower bound for $k>1$.
\begin{conjecture}
\label{conj:Poisson_k_limit_beta_kn}
If the conditions in Conjecture \ref{conj:Poisson_k_beta_kn_fix_k_dec_n} are satisfied, so that $\beta_{k,n} - n$ is a decreasing function of $n$,
then for fixed $k>1$, the limit as $n\to\infty$ is as follows,
and moreover we conjecture it is a sharp lower bound for the value of $\beta_{k,n} - n$.
\bq
\label{eq:Poisson_k_limit_beta_kn}
\lim_{n\to\infty} (\beta_{k,n} - n) = \textrm{\rm frac}\Bigl(\frac{3k+5}{8}\Bigr) +\frac{k-1}{8(2k+1)} \,.
\eq
A conjecture for a sharp upper bound for the value of $\beta_{k,n} - n$ is not known.
\end{conjecture}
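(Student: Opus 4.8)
The plan is to treat $Y_{k,\lambda}$ as a sum of independent scaled Poisson variables and to apply an Edgeworth (local limit) expansion as $\lambda\to\infty$. From the pgf in eq.~\eqref{eq:Adelson_fx_series} one reads off $Y_{k,\lambda}\stackrel{d}{=}\sum_{j=1}^{k}jN_j$ with $N_j\sim\mathrm{Poisson}(\lambda)$ independent, so $Y_{k,\lambda}$ is a lattice variable of span $1$ (as $\gcd(1,\dots,k)=1$) whose cumulants are $c_r=\lambda S_r$, where $S_r=\sum_{j=1}^{k}j^r$; in particular $S_1=\kappa$, $S_2=\tfrac16 k(k+1)(2k+1)$ and $S_3=\kappa^2$. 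First I would note that, for fixed $k$, the mean $\mu=\kappa\lambda$ and variance $\sigma^2=c_2$ grow linearly in $\lambda$ while the standardized cumulants $c_r/\sigma^r$ of order $r\ge3$ decay like $\lambda^{1-r/2}$; this is exactly what makes a Gaussian-plus-corrections description accurate in the relevant regime $n\ge 2\kappa$, i.e.~$\lambda\ge 2$.

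The heart of the argument is a local limit theorem carrying an explicit first Edgeworth correction, valid uniformly for $n$ in an $O(\sigma)$ window about $\mu$,
\bq
f_k(n;\lambda)=\frac{1}{\sigma}\,\phi\!\Bigl(\frac{n-\mu}{\sigma}\Bigr)\Bigl[1+\frac{\gamma}{6}H_3\!\Bigl(\frac{n-\mu}{\sigma}\Bigr)+O(\lambda^{-1})\Bigr],
\eq
with skewness $\gamma=c_3/\sigma^3$ and $H_3(z)=z^3-3z$. The value $\beta_{k,n}$ is defined by the double-mode condition $f_k(m;\lambda)=f_k(m+1;\lambda)$ at $m=m_k(n/\kappa)=n-\lfloor(3k+5)/8\rfloor$, which is exactly the mode-transition equation furnished by the recurrence eq.~\eqref{eq:recurrence_GPS}. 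Writing $\psi(x):=\log f_k(x;\lambda)$ for the smooth envelope supplied by the expansion, a midpoint expansion gives $\psi(m+1)-\psi(m)=\psi'(m+\tfrac12)+O(\psi''')$; since $\psi'''=O(\gamma/\sigma^2)$ is negligible, the condition forces the continuous mode to sit at $m+\tfrac12+o(1)$.

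Next I would locate that continuous mode explicitly. Solving $\psi'(x)=0$ from the Edgeworth form gives, to leading order, a mode shift $\mu-x_{\mathrm{mode}}=\tfrac12\gamma\sigma+o(1)=c_3/(2c_2)+o(1)$; crucially $c_3/(2c_2)=S_3/(2S_2)$ is \emph{independent of $\lambda$}, equal to $3k(k+1)/(4(2k+1))$, while the neglected kurtosis and $\gamma^2$ contributions displace the mode only by $O(\lambda^{-1/2})\to0$ in absolute units. Combining $x_{\mathrm{mode}}=m+\tfrac12+o(1)$ with $\mu=\beta_{k,n}$ yields
\bq
\beta_{k,n}-m=\frac12+\frac{3k(k+1)}{4(2k+1)}+o(1)=\frac{3k^2+7k+2}{4(2k+1)}+o(1).
\eq
Substituting $m=n-\lfloor(3k+5)/8\rfloor$ and using $\tfrac{3k^2+7k+2}{4(2k+1)}-\lfloor(3k+5)/8\rfloor=\mathrm{frac}\bigl(\tfrac{3k+5}{8}\bigr)+\tfrac{k-1}{8(2k+1)}$ reproduces eq.~\eqref{eq:Poisson_k_limit_beta_kn} as $n\to\infty$. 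Finally, under the monotonicity assumed in Conjecture~\ref{conj:Poisson_k_beta_kn_fix_k_dec_n}, a decreasing sequence converging to this value attains it as its infimum, which gives the sharp lower bound.

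The main obstacle is making the local limit theorem quantitative and \emph{uniform} across the whole $O(1)$ neighbourhood of the peak, with errors controlled tightly enough that (i) the envelope $\psi$ is genuinely smooth and unimodal near the top for every $\lambda\ge2$ (the paper observes the histogram is smooth only once $n\ge2\kappa$), and (ii) the discarded $O(\lambda^{-1/2})$ mode displacement provably vanishes rather than merely appearing negligible in the numerical fits. A secondary difficulty is establishing the monotonicity of $\beta_{k,n}-n$ itself (Conjecture~\ref{conj:Poisson_k_beta_kn_fix_k_dec_n}): the present statement takes it as a hypothesis, but it is precisely what promotes the limit to a \emph{sharp} bound, and proving it would likely require a sign analysis of $\beta_{k,n+1}-\beta_{k,n}$ carried through the same expansion to one further order.
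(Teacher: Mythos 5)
First, be aware that the paper contains no proof of this statement: Conjecture~\ref{conj:Poisson_k_limit_beta_kn} rests entirely on numerical fitting of the asymptotic form \eqref{eq:Poisson_k_asymp_beta_kn2}, with the limit \eqref{eq:Poisson_k_limit_beta_kn} obtained by dropping the empirically fitted correction $B_{k,n}$. Your Edgeworth/local-limit route is therefore genuinely different, and it buys something the paper explicitly says it lacks: an explanation of where the constants come from. Your algebra is correct. With $c_r=\lambda S_r$, $S_2=\tfrac16 k(k+1)(2k+1)$ and $S_3=\kappa^2$, the skewness-induced mode shift is $c_3/(2c_2)=3k(k+1)/(4(2k+1))$, and indeed
$\tfrac12+\tfrac{3k(k+1)}{4(2k+1)}=\tfrac{3k^2+7k+2}{4(2k+1)}=\tfrac{3k+5}{8}+\tfrac{k-1}{8(2k+1)}$,
so subtracting $\lfloor(3k+5)/8\rfloor$ via \eqref{eq:Poisson_k_mode_int_n} reproduces \eqref{eq:Poisson_k_limit_beta_kn} exactly. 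This demystifies the ``$(3k+5)/8$'' pattern the paper calls unexplained; moreover, the same machinery applied to the median (the CDF expansion with continuity correction gives a shift $\tfrac12+\gamma\sigma/6$, and $\tfrac12+\tfrac{k(k+1)}{4(2k+1)}=\tfrac{k+4}{8}+\tfrac{k}{8(2k+1)}$) recovers the median limit \eqref{eq:Poisson_k_limit_alpha_kn} as well, and even gets the exact value $\beta_{1,n}-n=0$ right at $k=1$. The sharpness step is also fine: under the monotonicity hypothesis of Conjecture~\ref{conj:Poisson_k_beta_kn_fix_k_dec_n}, a decreasing sequence converging to $L$ has infimum $L$.

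The obstacle you flag is indeed the crux, and it is worth stating precisely why. Locating the double-mode crossing to $o(1)$ accuracy in $\beta_{k,n}$ requires the discrete log-ratio $\log\bigl(f_k(m+1;\lambda)/f_k(m;\lambda)\bigr)$ to accuracy $o(\sigma^{-2})$, since perturbing this quantity by $\delta$ moves the root in $\mu$ by roughly $\delta\sigma^2$. A one-term Edgeworth expansion with uniform \emph{relative} error $O(\lambda^{-1})=O(\sigma^{-2})$, as written in your display, is exactly at the borderline and is not sufficient: the error terms at $m$ and at $m+1$ need not cancel in the ratio, leaving an $O(1)$ ambiguity in $\beta_{k,n}-m$. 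One must either carry the expansion through the kurtosis/$\gamma^2$ term so that the uniform \emph{additive} error is $o(\sigma^{-3})$, or show the error is itself a smooth function of $n$ with increments $o(\sigma^{-2})$. Fortunately the characteristic function $\exp\bigl(\lambda\sum_{j=1}^k(e^{ijt}-1)\bigr)$ is explicit and valid for all real $\lambda>0$ (no triangular-array issues), so Fourier inversion can in principle deliver both this refined error bound and the unimodality-near-the-peak statement your obstacle (i) requires. In short: this is the right strategy and the computations supporting the limit are verified, but as you acknowledge it is a proof plan rather than a proof, which is consistent with the paper leaving the statement as a conjecture.
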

\noindent
For $k=1$, the difference between the mean and mode is known exactly.
We state the following for $k>1$.
\begin{conjecture}
\label{conj:Poisson_k_limit_mean_minus_mode}
If the conditions in Conjecture \ref{conj:Poisson_k_beta_kn_fix_k_dec_n} are satisfied, so that $\beta_{k,n} - n$ is a decreasing function of $n$,
then for fixed $k>1$, the limit as $n\to\infty$ is as follows
and moreover we conjecture it is a sharp lower bound for the value of $\beta_{k,n} - m_k(n/\kappa)$.
\bq
\label{eq:Poisson_k_limit_mean_minus_mode}
\lim_{n\to\infty} (\beta_{k,n} - m_k(n/\kappa)) = \frac{6k+11}{16} -\frac{3}{16(2k+1)} \,.
\eq
A conjecture for a sharp upper bound for the value of $\beta_{k,n} - m_k(n/\kappa)$ is not known.
\end{conjecture}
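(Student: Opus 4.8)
The plan is to reduce this statement to the limit already recorded in Conjecture~\ref{conj:Poisson_k_limit_beta_kn}, so that no fresh asymptotic analysis is needed once that conjecture (and the mode formula) is granted. The starting point is the exact decomposition
\[
\beta_{k,n} - m_k(n/\kappa) = (\beta_{k,n} - n) + \bigl(n - m_k(n/\kappa)\bigr),
\]
in which, by eq.~\eqref{eq:Poisson_k_mode_int_n}, the second summand equals the fixed integer $\lfloor(3k+5)/8\rfloor$, independent of $n$ for each fixed $k$. Thus the sequence $\beta_{k,n} - m_k(n/\kappa)$ is nothing but $\beta_{k,n}-n$ shifted by a constant, and both its limit and its monotonicity are inherited from those of $\beta_{k,n}-n$.

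First I would pass to the limit $n\to\infty$ and invoke eq.~\eqref{eq:Poisson_k_limit_beta_kn} for the value of $\lim_{n\to\infty}(\beta_{k,n}-n)$, obtaining
\[
\lim_{n\to\infty}\bigl(\beta_{k,n} - m_k(n/\kappa)\bigr) = \textrm{frac}\Bigl(\frac{3k+5}{8}\Bigr) + \frac{k-1}{8(2k+1)} + \biggl\lfloor\frac{3k+5}{8}\biggr\rfloor .
\]
The identity $\lfloor x\rfloor + \textrm{frac}(x) = x$ with $x=(3k+5)/8$ collapses the first and third terms to $(3k+5)/8$, leaving exactly the first line of eq.~\eqref{eq:Poisson_k_asymp_mean_minus_mode}. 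To reach the stated closed form I would then use $2(k-1) = (2k+1)-3$, which gives $\frac{k-1}{8(2k+1)} = \frac{1}{16} - \frac{3}{16(2k+1)}$; combined with $\frac{3k+5}{8} = \frac{6k+10}{16}$ this yields $\frac{6k+11}{16} - \frac{3}{16(2k+1)}$, as claimed. A convenient consistency check is that both forms equal $\frac{(3k+1)(k+2)}{4(2k+1)}$.

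For the sharpness of the lower bound, the constant-shift observation does all the work. Under the hypotheses of Conjecture~\ref{conj:Poisson_k_beta_kn_fix_k_dec_n} the sequence $\beta_{k,n}-n$ is decreasing in $n$, and adding the fixed integer $\lfloor(3k+5)/8\rfloor$ preserves this, so $\beta_{k,n} - m_k(n/\kappa)$ is itself decreasing and converges to the limit above. A decreasing convergent sequence has its infimum equal to its limit, which is therefore a sharp lower bound (approached from above but, under strict monotonicity, never attained), exactly as asserted.

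The genuinely hard part lies not in this deduction --- which is algebraic bookkeeping once the earlier conjectures are assumed --- but in the inputs themselves. Proving the result unconditionally would require establishing the mode formula eq.~\eqref{eq:Poisson_k_mode_int_n} and, above all, the asymptotic expansion eq.~\eqref{eq:Poisson_k_asymp_beta_kn2} for $\beta_{k,n}$. Here $\beta_{k,n}$ is the mean at the double-mode transition, i.e.\ the value of $\mu_k(\lambda)$ solving $f_k(m;\lambda)=f_k(m+1;\lambda)$ with $m=m_k(n/\kappa)$; extracting its constant term and $O(1/n)$ correction would demand a careful normal (or saddle-point) approximation to the pmf in eq.~\eqref{eq:pmf_Poisson_order_k}, controlled through the recurrences of Sec.~\ref{sec:recurrences}. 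That asymptotic analysis, not the final combination presented here, is where the real difficulty resides.
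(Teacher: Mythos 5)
Your derivation is correct and follows essentially the same route as the paper: the paper obtains eq.~\eqref{eq:Poisson_k_limit_mean_minus_mode} by combining the mode formula \eqref{eq:Poisson_k_mode_int_n} with the asymptotic expansion \eqref{eq:Poisson_k_asymp_beta_kn2} (equivalently, adding the constant $\lfloor(3k+5)/8\rfloor$ to $\beta_{k,n}-n$), recombining the floor and fractional parts into $(3k+5)/8$, performing the same algebraic simplification to $\frac{6k+11}{16}-\frac{3}{16(2k+1)}$ as displayed in eq.~\eqref{eq:Poisson_k_asymp_mean_minus_mode}, and letting $B_{k,n}\to0$, with sharpness of the lower bound inherited from the conjectured monotonicity in Conjecture~\ref{conj:Poisson_k_beta_kn_fix_k_dec_n}. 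You also correctly identify that the result remains conditional, since its inputs --- the mode formula and the expansion of $\beta_{k,n}$ --- are themselves only numerically supported conjectures in the paper.
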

\begin{conjecture}
\label{conj:Poisson_k_limit_mean_minus_mode_k_infty_n_infty}
We can rewrite eq.~\eqref{eq:Poisson_k_limit_mean_minus_mode}
and conjecture a further limit as both $k\to\infty$ and $n\to\infty$:
\bq
\label{eq:Poisson_k_limit_mean_minus_mode_k_infty_n_infty}
\lim_{k\to\infty} \biggl\{\Bigl[\,\lim_{n\to\infty} (\beta_{k,n} - m_k(n/\kappa))\Bigr] - \frac{6k+11}{16}\,\biggr\} = 0 \,.
\eq
\end{conjecture}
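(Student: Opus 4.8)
The plan is to split eq.~\eqref{eq:Poisson_k_limit_mean_minus_mode_k_infty_n_infty} into a trivial outer limit and a substantive inner limit. Granting the preceding Conjecture~\ref{conj:Poisson_k_limit_mean_minus_mode}, the inner bracket equals $\frac{6k+11}{16}-\frac{3}{16(2k+1)}$, so the quantity inside the outer braces is exactly $-\frac{3}{16(2k+1)}$, which tends to $0$ as $k\to\infty$. Hence, conditionally, the statement is a one-line computation and all the content sits in the inner limit. I would therefore aim for an \emph{unconditional} derivation of
\bq
\lim_{n\to\infty}\bigl(\beta_{k,n}-m_k(n/\kappa)\bigr)=\tfrac12+\frac{c_3}{2c_2},
\eq
where $c_2,c_3$ denote the second and third cumulants of $\mathrm{Poisson}(k,\lambda)$, and then send $k\to\infty$. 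Because the limits are iterated ($n$ first, then $k$), \emph{no} uniformity in $k$ is needed; it suffices to treat each fixed $k$ separately.

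The cumulants follow at once from eq.~\eqref{eq:Adelson_fx_series}: setting $x=e^t$ gives the cumulant generating function $\lambda\sum_{j=1}^{k}(e^{jt}-1)$, so $c_r=\lambda\sum_{j=1}^{k}j^r$, whence $c_1=\mu=\kappa\lambda$, $c_2=\sigma^2$ as in eq.~\eqref{eq:variance_Poisson_orderk}, and $c_3=\lambda\kappa^2$. For large mean the pmf $h_k(\cdot;\lambda)$ is approximated by a continuous density $g$ through an Edgeworth/saddle-point expansion, and maximising the leading Gram--Charlier form places the peak of $g$ at $\mu-c_3/(2c_2)+O(1/\lambda)$. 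A short computation gives
\bq
\frac{c_3}{2c_2}=\frac{3k(k+1)}{4(2k+1)}=\frac{3k}{8}+\frac{3}{16}-\frac{3}{16(2k+1)},
\eq
which already reproduces the $n$-independent part of eq.~\eqref{eq:Poisson_k_limit_mean_minus_mode} up to a single additive constant.

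That missing constant is supplied by the definition of $\beta_{k,n}$ as a \emph{double-mode} transition. At $\mu=\beta_{k,n}$ one has $h_k(m;\lambda)=h_k(m+1;\lambda)$ with $m=m_k(n/\kappa)$, which in the continuous approximation reads $g(m)=g(m+1)$; expanding $g$ about its peak shows that the midpoint $m+\tfrac12$ differs from the peak only by a skewness-induced offset of order $1/\lambda$, which vanishes as $n\to\infty$. Thus in the limit the peak sits at $m+\tfrac12$, so $\beta_{k,n}\to m+\tfrac12+c_3/(2c_2)$, which is the displayed inner limit. Combining with the second identity yields $\lim_{n\to\infty}(\beta_{k,n}-m_k(n/\kappa))=\frac{6k+11}{16}-\frac{3}{16(2k+1)}$; subtracting $\frac{6k+11}{16}$ and letting $k\to\infty$ gives $0$, as required.

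The main obstacle is making the Edgeworth step rigorous \emph{at the level of the integer mode}, which is a discrete $\mathrm{argmax}$ rather than the zero of a smooth derivative. One must control a local limit theorem for the lattice distribution $h_k(\cdot;\lambda)$ with a remainder small enough both to pin the peak-to-mean distance to $c_3/(2c_2)$ and, more delicately, to certify that the double-mode midpoint converges to the peak so that the continuity correction is \emph{exactly} $\tfrac12$ and not $\tfrac12+c$ for some residual $c$. A promising route is to locate the sign change of the increment $h_k(n;\lambda)-h_k(n-1;\lambda)$ (equivalently of $p_n-p_{n-1}$ in eq.~\eqref{eq:KMrec}, the two differing only by the constant factor $e^{k\lambda}$) using that four-term recurrence, which expresses each value through a fixed number (four) of predecessors regardless of $k$ and hence yields the ratio of consecutive values in closed form; this should permit an explicit $1/n$-expansion of the transition value $\beta_{k,n}$ whose constant term is precisely the sought limit.
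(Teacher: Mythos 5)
Your opening paragraph coincides with the paper's own reasoning: the paper supplies no proof of this statement (every result in it is a numerically supported conjecture), and it presents eq.~\eqref{eq:Poisson_k_limit_mean_minus_mode_k_infty_n_infty} purely as a rewriting of Conjecture~\ref{conj:Poisson_k_limit_mean_minus_mode} --- granting the inner limit $\frac{6k+11}{16}-\frac{3}{16(2k+1)}$, the braced quantity equals $-\frac{3}{16(2k+1)}$, which vanishes as $k\to\infty$, with no uniformity issue since the limits are iterated. Where you genuinely depart from the paper is in attempting an analytic derivation of the inner limit rather than a numerical fit. Your cumulant computation is correct: the cumulant generating function is $\lambda\sum_{j=1}^{k}(e^{jt}-1)$, so $c_r=\lambda\sum_{j=1}^{k}j^r$ and $\frac{c_3}{2c_2}=\frac{3k(k+1)}{4(2k+1)}=\frac{6k+3}{16}-\frac{3}{16(2k+1)}$; adding the continuity correction $\frac12$ from the double-mode midpoint reproduces $\frac{6k+11}{16}-\frac{3}{16(2k+1)}$ exactly. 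This buys something the paper does not have: the paper's constants in eq.~\eqref{eq:Poisson_k_limit_mean_minus_mode} were obtained by fitting, and the paper explicitly remarks (both in Sec.~\ref{sec:mode} and the conclusion) that the recurring factor of $8$ in these formulas is unexplained, whereas your Edgeworth argument shows the $8$'s arise from expanding the skewness shift $c_3/(2c_2)$ in $k$. Moreover your route targets the substantive Conjecture~\ref{conj:Poisson_k_limit_mean_minus_mode} itself, of which the present statement is a trivial corollary. The caveat --- which you state yourself --- is that the Edgeworth/local-limit step is heuristic: pinning the discrete double-mode transition $\beta_{k,n}$ to the continuous peak within $o(1)$ as $n\to\infty$ requires a local limit theorem with relative error control on ratios of consecutive pmf values, so your argument is strong theoretical evidence and a credible proof program, not yet a proof; but since the paper asserts nothing beyond the conditional rewriting, your proposal fully matches the paper's content and then adds to it.
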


\newpage
\section{\label{sec:order}Ordering of mean, median and mode}
Although not true in all cases, it is recognized that many statistical distributions
exhibit the ordering (i) mode $<$ median $<$ mean, else (ii) mean $<$ median $<$ mode.
Such an ordering is difficult to quantify for the Poisson distribution of order $k$.
Even for the standard Poisson distribution, the mean can be either lower or higher than the median.
(See \cite{Choi}: if the median equals $n$, the value of the mean $\mu$ is bounded by $n-\frac13 < \mu < n+\ln2$.)

We therefore treat only integer values for the mean.
We {\em first} fix the value of the mean to equal an integer $n>0$, i.e.~we fix $\lambda=n/\kappa$,
and {\em then} calculate the corresponding values of the median $\nu_k(n/\kappa)$ and mode $m_k(n/\kappa)$.
With this policy, the Poisson distribution of order $k$ displays the first ordering above.
For $k=2$ or $3$ and $n\ge2\kappa$, if the mean equals $n$, i.e.~$\lambda=n/\kappa$,
then the median (see eq.~\eqref{eq:Poisson_k_median_int_n}) and the mode (see eq.~\eqref{eq:Poisson_k_mode_int_n}) obey the ordering
\bq
\label{eq:ordering1}
\{ m_k(n/\kappa),\, \nu_k(n/\kappa),\, \mu_k(n/\kappa) \} = \{n-1,\,n,\,n\} \,.
\eq
For $k \ge 4$ (and $n\ge2\kappa$) it is a strict ordering
\bq
\label{eq:ordering2}
  m_k(n/\kappa) < \nu_k(n/\kappa) < \mu_k(n/\kappa) \,.
\eq
The restriction $n\ge2\kappa$ is required to justify eq.~\eqref{eq:Poisson_k_mode_int_n} for the mode.
Note of course that both eqs.~\eqref{eq:Poisson_k_median_int_n} and \eqref{eq:Poisson_k_mode_int_n} are conjectured expressions.

We have seen that for sufficiently small values of $\lambda$, both the median and mode equal zero.
A Monte Carlo search was performed to calculate the median and mode for values $2 \le k \le 1000$ and $1 \le n < 2\kappa$
(and the value $\lambda=n/\kappa$ was employed in the test, so the mean was $n$).
Note that for $n<\kappa$ we do not have an analytical expression for the median
and for $n<2\kappa$ we do not have an analytical expression for the mode,
hence their values were computed {\em ab initio}.
The values of the mode, median and mean obey the ordering
\bq
\label{eq:ordering3}
m_k(n/\kappa) \le \nu_k(n/\kappa) \le \mu_k(n/\kappa) \qquad (1 \le n < 2\kappa)\,.
\eq
In contrast to eqs.~\eqref{eq:ordering1} and \eqref{eq:ordering2}, we require ``mode $\le$ median''
because it is possible for the median and mode to be equal if $n < 2\kappa$.
In particular they could both be zero.

\newpage
\section{\label{sec:conc}Conclusion}
There are several points to note about the Poisson distribution of order $k$.
First, all of the calculations in this note are numerical, hence all the results are phrased as conjectures.
Several conjectures were stated for both the median and the mode of the Poisson distribution of order $k$.
These included conjectures for exact results, limits and (possibly sharp) bounds.
The derivation of proofs of the observed results is left for future work.  
Nevertheless, there are some observations which can be stated without detailed mathematics.
\begin{enumerate}
\item
  Possibly the most important finding is that, for $k>1$, it is much better to work with
  $\kappa = k(k+1)/2$ and the mean $\mu=\kappa\lambda$ instead of $k$ and $\lambda$ themselves.
Better results are obtained thereby.
It might in fact be better to characterize the Poisson distribution of order $k$ by its mean, not by $\lambda$.
\item
  This note clarifies the relation of ``$n$'' to the mean, median and mode for $k>1$.
  In particular, if the mean equals $n$, i.e.~$\lambda = n/\kappa$, this note conjectures {\em exact} expressions
  for the median (for $n\ge\kappa$) and the mode (for $n\ge2\kappa$).
\item
The number $8$ appears frequently in the asymptotic expressions for both the median and mode of the Poisson distribution of order $k$.
For example $(k+4)/8$ for the median and $(3k+5)/8$ for the mode 
and the exponent for $(\kappa\lambda)_0$ ($=9/8$, see Fig.~\ref{fig:lamfirst_mode}).
The reason for this is unknown.
\item
  The quality of the numerical fits for the median is better than the quality of the fits for the mode. 
In general, it was observed that the numerical errors and stability of the calculations were better for the median than the mode.
Possibly the reason is that the mode is determined by the comparison of the heights of individual histogram bins in the probability mass function,
whereas the median is computed using the {\em sum} of multiple bins,
and the summation over histogram bins smooths out the numerical fluctuations in the values of the individual bins.
\end{enumerate}

\section*{\label{sec:ack}Acknowledgements}
The author is indebted to Professor A.~N.~Philippou for bringing the Poisson distribution of order $k$ to his attention,
and for stimulating his interest in the numerical analysis of the properties of the distribution.

\newpage

\newpage
\begin{table}[htb]
\centering
\begin{tabular}[width=0.75\textwidth]{|l|lllllll|}
  \hline
  $n$ & $\lambda^0(=1)$ & $\lambda$ & $\lambda^2$ & $\lambda^3$ & $\lambda^4$ & $\lambda^5$ & $\lambda^6$ \\
  \hline
  0 & $(0,0)$ &&&&&& \\
  \hline
  1 & & $(1,0)$ &&&&& \\
  \hline
  2 & & $(0,1)$ & $(2,0)$ &&&& \\
  \hline
  3 & & & $(1,1)$ & $(3,0)$ &&& \\
  \hline
  4 & & & $(0,2)$ & $(2,1)$ & $(4,0)$ && \\
  \hline
  5 & & & & $(1,2)$ & $(3,1)$ & $(5,0)$ & \\
  \hline
  6 & & & & $(0,3)$ & $(2,2)$ & $(4,1)$ & $(6,0)$ \\
  \hline
  $\vdots$ & $\cdots$ &&&&&& \\
  \hline
\end{tabular}
\caption{\label{tb:tablehk}
Tabulation of tuples $(n_1,n_2)$ for the Poisson distribution of order $2$ and parameter $\lambda$.}
\end{table}

\newpage
\begin{figure}[!htb]
\centering
\includegraphics[width=0.75\textwidth]{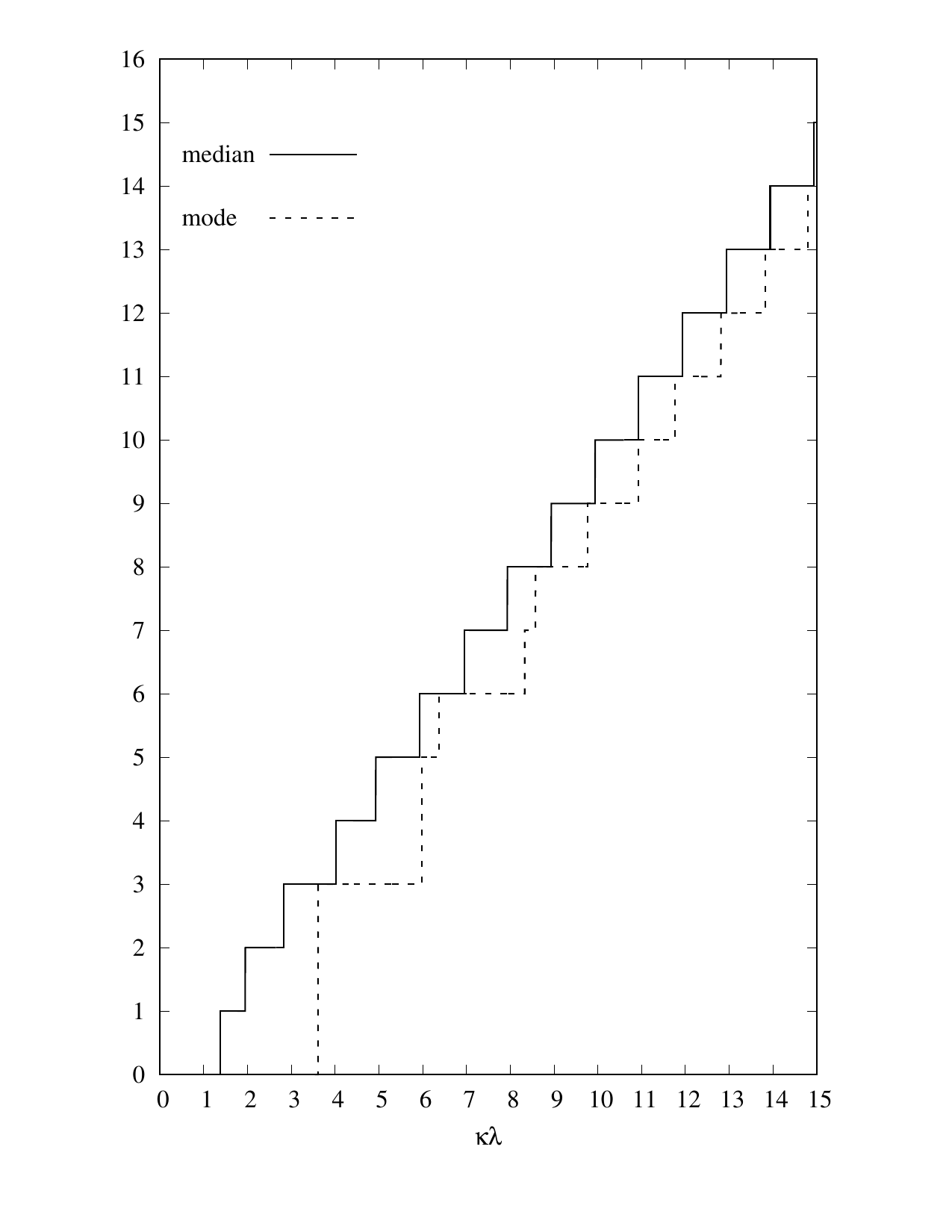}
\caption{\small
\label{fig:median_mode_k3}
Plot of the median (solid) and mode (dashed) as a function of $\kappa\lambda$ for the Poisson distribution of order $3$.}
\end{figure}

\newpage
\begin{figure}[!htb]
\centering
\includegraphics[width=0.75\textwidth]{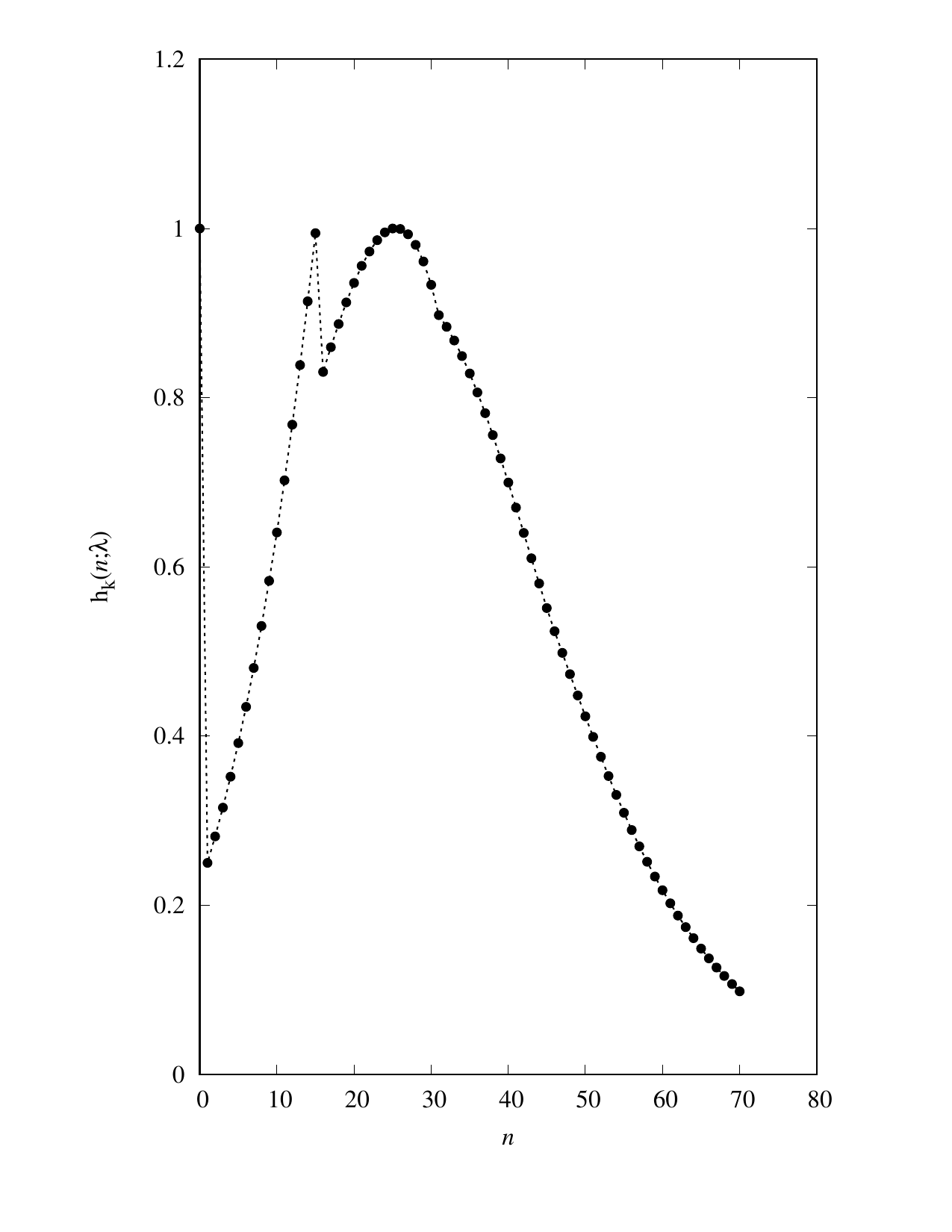}
\caption{\small
\label{fig:histk15_dbl_mode}
Histogram plot of $h_k(n;\lambda)$ for the Poisson distribution of order $15$, exhibiting the first double mode (at $n=0$ and $25$).}
\end{figure}

\newpage
\begin{figure}[!htb]
\centering
\includegraphics[width=0.75\textwidth]{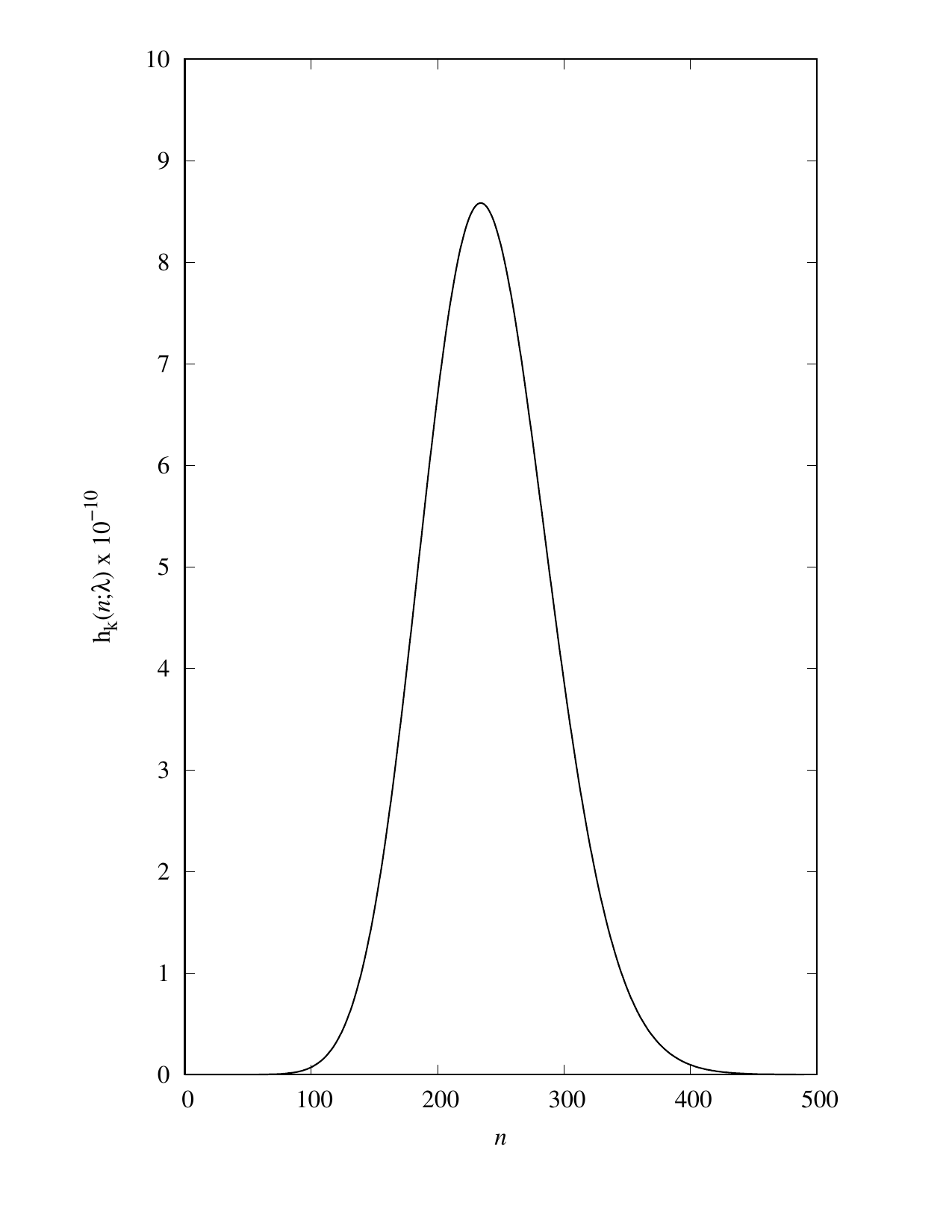}
\caption{\small
\label{fig:histk15_lam2}
Histogram plot of $h_k(n;\lambda)$ for the Poisson distribution of order $15$ for $\lambda=2$.}
\end{figure}

\newpage
\begin{figure}[!htb]
\centering
\includegraphics[width=0.75\textwidth]{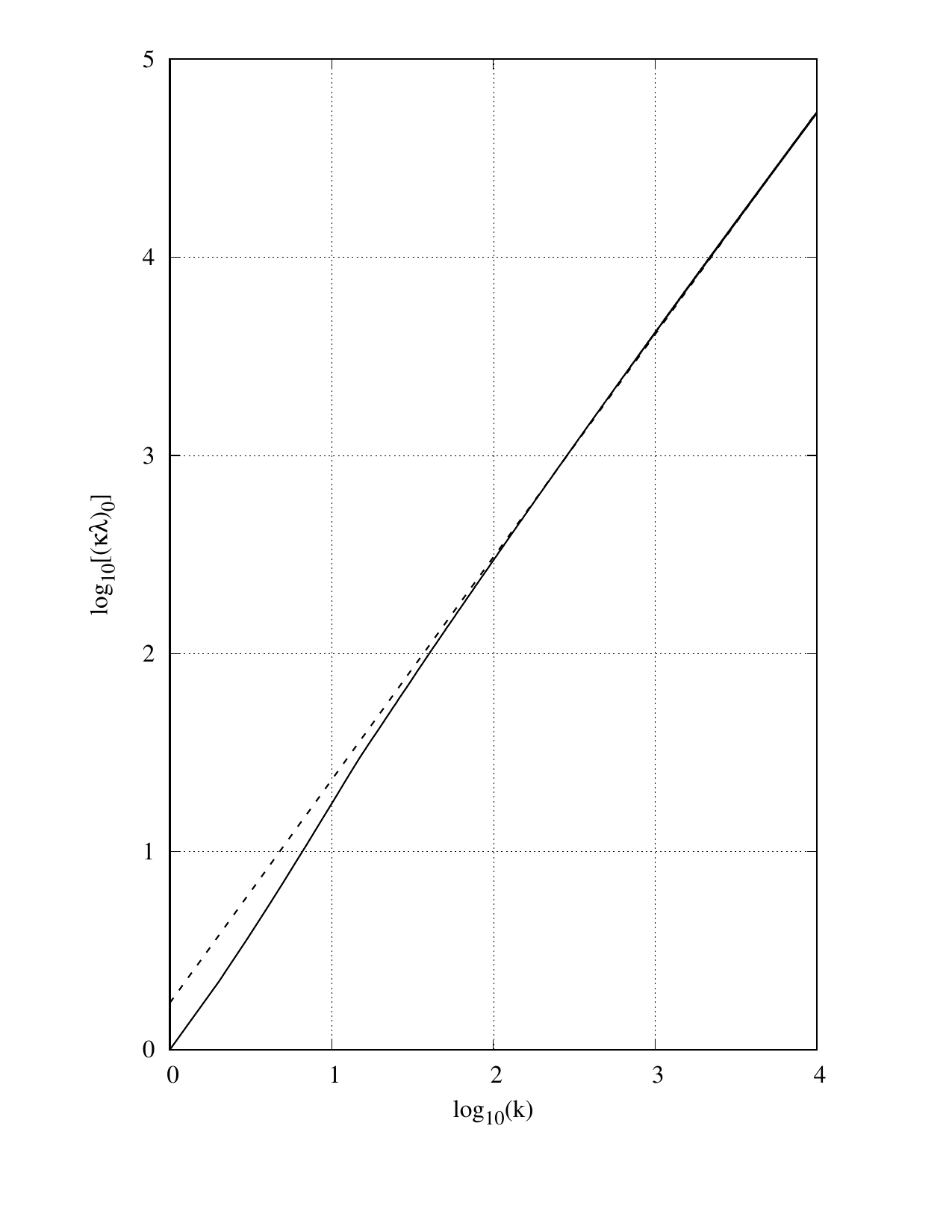}
\caption{\small
\label{fig:lamfirst_mode}
Logarithmic plot of the value of $(\kappa\lambda)_0$ for the Poisson distribution of order $k$.
The dashed line is a power law fit $\textrm{const}\times k^{1.125}$.}
\end{figure}

\end{document}